\newtheorem{theorem}{Theorem}[section]
\newtheorem{corollary}[theorem]{Corollary}
\newtheorem{conjecture}[theorem]{Conjecture}
\newtheorem{proposition}[theorem]{Proposition}
\newtheorem{remark}[theorem]{Remark}
\newtheorem{notation}[theorem]{Notation}
\DeclareMathOperator{\Tor}{Tor}
\DeclareMathOperator{\pdim}{pdim}
\DeclareMathOperator{\depth}{depth}
\newcommand{\m}{\mathfrak{m}}
\newcommand{\reg}{\operatorname{reg}}
\renewcommand{\span}{\operatorname{Span}}
\begin{document}
\title{Syzygies, Betti numbers and regularity of cover ideals of certain multipartite graphs}
\author{A. V. Jayanthan}
\email{jayanav@iitm.ac.in}
\address{Department of Mathematics, Indian Institute of Technology
Madras, Chennai, INDIA - 60036}
\author{Neeraj Kumar}
\email{neeraj@iith.ac.in, neeraj.unix@gmail.com}
\address{Department of Mathematics, Indian Institute of Technology
Hyderabad, Kandi, Sangareddy, INDIA - 502285}
\keywords{Syzygy, Betti number, Castelnuovo-Mumford regularity,
Bipartite graph, Multipartite graph}
\thanks{AMS 2010 Classification: 13D02}
\begin{abstract}
Let $G$ be a finite simple graph on $n$ vertices. Let $J_G \subset
K[x_1, \ldots, x_n]$ be the cover ideal of $G$. In this article, we
obtain syzygies, Betti numbers and Castelnuovo-Mumford regularity of
$J_G^s$ for all $s \geq 1$ for certain classes of graphs $G$.
\end{abstract}
\maketitle

\section{Introduction}
Computation of minimal free resolution and syzygies of ideals and
modules over polynomial rings have always attracted researchers in
commutative algebra and algebraic geometry. Recently, there have been
a lot of interest in studying the homological aspects of squarefree
monomial ideals in polynomial rings. Since these ideals have strong
combinatorial connections, problems in this area have attracted both
commutative algebrists and combinatorists. Even in this case, there
are only very few cases of ideals for which explicit computation of
the resolution, including the complete description of syzygies, is
done. The main open problems in this area are to find/construct
minimal free resolutions in more cases, and to introduce new ideas and
structures, \cite[Remark 2.6]{ps08}. There are even less results on
the resolution of powers of ideals. If $I$ is generated by a regular
sequence, then $I^s$ is a determinantal ideal and hence the minimal
free resolution of $I^s$ can be obtained by using the Eagon-Northcott
complex, \cite{be75}. Since the maps in this resolution may not be degree
preserving, the computation of graded Betti numbers and more generally
the computation of the syzygies may not be possible. In
\cite{guardo-vtuyl05}, they explicitly compute the graded Betti numbers
of $I^s$ if $I$ is a homogeneous complete intersection in a polynomial
ring.

Among the resolutions, linear resolutions are possibly the simplest to
describe. Let $R = K[x_1,\ldots,x_n]$, where $K$ is a field. If $I$ is
the defining ideal of the rational normal curve in $\mathbb{P}^{n-1}$,
then Conca proved that $I^s$ has a linear resolution for all $s$,
\cite{conca00}. Herzog, Hibi and Zheng proved that if $I$ is a
monomial ideal generated in degree $2$, then $I$ has linear resolution
if and only if $I$ has linear quotients if and only if $I^s$ has a
linear resolution for all $s \geq 1$, \cite{hhz04}. It was proved by
Conca and Herzog, \cite{ch03}, that if $I$ is a polymatroidal ideal in
$R$, then $I$ has linear quotients. Moreover, they proved that the
product of polymatroidal ideals are polymatroidal. Therefore, if $I$
is polymatroidal, then $I^s$ has linear resolution for all $s \geq 1$.
If $I$ is a lexsegment ideal, then Ene and Olteanu proved that $I$ has
linear resolution if and only if $I$ has linear quotients if and only
if $I^s$ has linear resolution for all $s \geq 1$ if and only if $I^s$
has linear quotients for all $s \geq 1$, \cite{ene-olteanu12}.

In all the above mentioned results, the authors do not compute the
syzygy modules. In general, it is a non-trivial task to compute the
syzygy modules, even when the resolution is linear. In this article,
we compute the resolution, syzygies and Betti numbers of powers of
certain classes of squarefree monomial ideals. As a by-product, we
obtain expression for the regularity of powers of these classes of
ideals. It was shown by
Kodiyalam \cite{vijay} and independently by Cutkosky, Herzog and Trung
\cite{CHT99} that if $I$ is a homogeneous ideal in a polynomial ring,
then there exist non-negative integers $d, e$ and $s_0$ such that
$\reg(I^s) = ds + e$ for all $s \geq s_0$, where $\reg(-)$ denote the
Castelnuovo-Mumford regularity. Kodiyalam proved that $d \leq
\deg(I)$, where $\deg(I)$ denotes the largest degree of a homogeneous
minimal generator of $I$. In general, the stability index $s_0$ and
the constant term $e$ are hard to compute. There have been discrete
attempts in identifying $s_0$ and $e$ for certain classes of ideals.

Recently, there have been a lot of activity in studying the interplay
between the combinatorial properties of graphs and the algebraic
properties of ideals associated to graphs.  For a finite simple graph
$G$ on the vertex set $\{x_1, \ldots, x_n\}$, let $J_G \subset R =
K[x_1, \ldots, x_n]$, where $K$ is a field, denote the cover ideal of
$G$ (see Section 2 for the definition). It may be noted that $J_G =
\cap_{\{x_i,x_j\} \in E(G)} (x_i, x_j)$ is the Alexander dual of the
edge ideal of $I$ (see Section 2 for the definition). While the
connection between algebraic properties of the edge ideal and
combinatorial properties of the graph has been studied extensively,
not much is known about the connection between the properties of the
cover ideal and the graph.
In \cite{fakhari16},
Seyed Fakhari studied certain homological properties of symbolic
powers of cover ideals of very well-covered and bipartite graphs. It
was shown that if $G$ is a very well-covered graph and $J_G$ has a
linear resolution, then $J_G^{(s)}$ has a linear resolution for all $s
\geq 1$. Furthermore, it was proved that if $G$ is a bipartite graph
with $n$ vertices, then for $s \geq 1$, \[ \reg(J_G^s) \leq s\deg(J_G)
+ \reg(J_G) + 1.\] Hang and Trung, in \cite{ht17}, studied
unimodular hypergraphs and proved that if $\mathcal{H}$ is a
unimodular hypergraph on $n$ vertices and rank $r$ and
$J_{\mathcal{H}}$ is the cover ideal of $\mathcal{H}$, then there
exists a non-negative integer $e \leq \dim(R/J_{\mathcal{H}}) -
\deg(J_{\mathcal{H}}) + 1$ such that \[reg J_{\mathcal{H}}^s =
\deg(J_{\mathcal{H}}) s + e\] for all $s \geq \frac{rn}{2} + 1$.
Since bipartite graphs are unimodular, their results hold true in
the case of bipartite graphs as well. While the first result gives
an upper bound for the constant term, the later result gives the
upper bound for both the stability index and the constant term. 

In this article, we
obtain the complete description of the minimal free resolution,
including the syzygies, of $J_G^s$ for some classes of multipartite
graphs. The
paper is organized as follows. In Section 2, we collect the
preliminaries required for the rest of the paper. We study the
resolution of powers of cover ideals of certain bipartite graphs in
Section 3.  If $G$ is a complete bipartite graph, then $J_G$ is a
regular sequence and hence the minimal graded free resolution of
$J_G^s$ can be obtained from \cite[Theorem 2.1]{guardo-vtuyl05}. 
We then move on to study some classes of bipartite graphs which
are not complete. We obtain the resolution, syzygies and Betti numbers
of powers of cover ideals of certain bipartite graphs, and as a
by-product, we obtain expression for the regularity of powers of these
ideals.

Section 4 is devoted to the study of resolution and regularity of
powers of cover ideals of certain complete multipartite graphs.  When
$G$ is the cycle of length three or the complete graph on $4$
vertices, we describe the graded minimal free resolution of $J_G^s$
for all $s \geq 1$. This allow us to compute the Betti numbers,
Hilbert series and the regularity of $J_G^s$ for all $s \geq 1$. As a
consequence, for cover ideals of complete tripartite and $4$-partite
graphs, we obtain precise expressions for the Betti numbers and the
regularity of $J_G^s$.  We conclude our article with a conjecture on
the resolution of $J_G^s$ for all $s \geq 1$, where $G$ is a complete
multipartite graph. 

\vskip 2mm
\noindent
\textbf{Acknowledgements:} Part of the work was done while the second
author was visiting Indian Institute of Technology Madras. He would
like to thank IIT Madras for their hospitality during the visit. All
our computations were done using Macaulay 2, \cite{M2}. We would like
to thank Huy T\`ai H\`a and S. A. Seyed Fakhari for their comments
which helped us improve the exposition.

\section{Preliminaries}

In this section, we set the notation 
for the rest of the paper. All the graphs that we consider in this
article are finite, simple and without isolated vertices. For a graph
$G$, $V(G)$ denotes the set of all vertices of $G$ and $E(G)$ denotes
the set of all edges of $G$. A graph $G$ is said to be a \textit{complete
multipartite} graph if $V(G)$ can be partitioned into sets $V_1,
\ldots, V_k$ for some $k \geq 2$ such that $\{x,y\} \in E(G)$ if and
only if $x \in V_i$ and $y \in V_j$ for $i \neq j$. When $k = 2$, 
the graph is called a \textit{complete bipartite} graph. If $k = 2$
with $|V_1| = m$ and $|V_2| = n$, we denote the corresponding complete
bipartite graph by $K_{m,n}$ or by $K_{V_1,V_2}$. If $G$ and $H$ are
graphs, then $G \cup H$ denote the graph on the vertex set $V(G) \cup
V(H)$ with $E(G\cup H) = E(G) \cup E(H)$. A graph $G$
is called a \textit{bipartite} graph if $V(G) = V_1 \sqcup V_2$ such
that $\{x,y\} \in E(G)$ only if $x \in V_1$ and $y \in V_2$. A subset
$w = \{x_{i_1}, \ldots, x_{i_r}\}$ of $V(G)$ is said to be a
\textit{vertex cover} of $G$ if $w \cap e \neq \emptyset$ for every $e
\in E(G)$. A vertex cover is said to be minimal if it is minimal with
respect to inclusion.

Let $G$ be a graph with $V(G) = \{x_1, \ldots, x_n\}$. Let $K$ be a
field and $R = K[x_1,\ldots,x_n]$. The \textit{edge ideal} of $G$ is
defined to be $I(G) = \langle\{x_ix_j ~
: ~ \{x_i,x_j\} \in E(G)\} \rangle \subset S$  and the \textit{cover
ideal} of $G$ is defined to be $J_G = \langle
\{x_{i_1}\cdots x_{i_r} ~ : ~ \{x_{i_1}, \ldots, x_{i_r}\}$. 
It can also be seen that $J_G$ is the Alexander dual of
$I(G)$.

Let $S=R/I$, where $R$ is a polynomial ring over $K$ and $I$ a
homogeneous ideal of $R$. For a finitely generated graded $S$-module
$M=\oplus M_i$, set
\[
 t_i^{S}(M)=\max \{j : \Tor_{i}^{S}(M,K)_j \neq 0 \},
\]
with $t_i^{S}(M)=-\infty $ if $\Tor_i^S(M,K) = 0.$ The
\emph{Castelnuovo-Mumford regularity}, denoted by $\reg_S(M)$, of an $S$-module
$M$ is defined to be
\[
 \reg_SM= \max \{t_i^{S}(M)-i: i \geq 0\}.
\]

\vskip 3mm

\section{Bipartite Graphs}
In this section, we study the regularity of powers of cover ideals of
certain bipartite graphs. 
We begin with a simple observation concerning the vertex covers of a
bipartite graph.

\begin{proposition}
Let $G$ be a bipartite graph on $n+m$ vertices. Then $G$ is a complete
bipartite graph if and only if $J_G$ is generated by a regular sequence.
\end{proposition}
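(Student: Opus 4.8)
The plan is to reduce the statement to a purely combinatorial assertion about the minimal vertex covers of $G$, via the standard dictionary between the minimal monomial generators of $J_G$ and the minimal vertex covers of $G$. The algebraic input I would isolate first is that a monomial ideal is generated by a regular sequence precisely when its minimal monomial generators are pairwise coprime. Indeed, if $J_G$ is generated by a regular sequence of length $k$, then $k \le \operatorname{grade}(J_G) \le \operatorname{ht}(J_G) \le \mu(J_G) \le k$ (the outer inequalities being Krull's height theorem and the obvious generator count), so $\operatorname{ht}(J_G) = \mu(J_G) = k$. Now, writing $m_1, \ldots, m_r$ for the uniquely determined minimal monomial generators of $J_G$ (so $r = \mu(J_G)$), the minimal primes of $J_G$ are generated by transversals of the supports $\operatorname{supp}(m_1), \ldots, \operatorname{supp}(m_r)$, so $\operatorname{ht}(J_G)$ equals the least number of variables meeting every support; this number equals $r$ if and only if the supports are pairwise disjoint, i.e. the $m_i$ are pairwise coprime. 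Translating through the dictionary, $J_G$ is generated by a regular sequence if and only if the minimal vertex covers of $G$ are pairwise disjoint.

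Next I would record the combinatorial facts needed, writing $V(G) = V_1 \sqcup V_2$. Since $G$ has no isolated vertices and every edge meets both parts, each of $V_1$ and $V_2$ is a vertex cover, and deleting any vertex uncovers an edge at that vertex; hence $V_1$ and $V_2$ are both (disjoint) minimal vertex covers. Moreover, any vertex cover disjoint from $V_1$ must contain the neighbourhood of every vertex of $V_1$, and since no vertex of $V_2$ is isolated this forces it to be all of $V_2$; symmetrically for $V_2$. Consequently every minimal vertex cover other than $V_2$ meets $V_1$, and every one other than $V_1$ meets $V_2$. For the forward direction I would then check that when $G = K_{m,n}$ the only minimal vertex covers are $V_1$ and $V_2$: a cover omitting a vertex from each part fails to cover the edge between them, so every cover contains $V_1$ or $V_2$, and minimality pins it to one of these. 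Thus $J_G = (x_1 \cdots x_m \, , \, y_1 \cdots y_n)$, which is a regular sequence since the two generators are coprime.

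For the converse I would argue contrapositively: if $G$ is not complete bipartite there is a non-edge $\{a,b\}$ with $a \in V_1$, $b \in V_2$, and I would produce a third minimal cover by setting $W = (V_1 \setminus \{a\}) \cup N(a)$. This $W$ is a vertex cover (edges at $a$ are covered by $N(a)$, all others by $V_1 \setminus \{a\}$), and $a \notin W$ while $b \notin W$ because $b \notin N(a)$; hence any minimal cover $W' \subseteq W$ satisfies $W' \neq V_1$ and $W' \neq V_2$, so by the previous paragraph $W'$ meets $V_1$. Then $W'$ and $V_1$ are distinct minimal covers sharing a vertex, so the minimal covers of $G$ are not pairwise disjoint and $J_G$ is not a regular sequence. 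The main obstacle is making this converse airtight: one must verify that $W$ really differs from both $V_1$ and $V_2$, handle the degenerate cases $|V_1| = 1$ or $|V_2| = 1$ (where the no-isolated-vertices hypothesis already forces $G$ to be a complete bipartite star, so these do not arise under non-completeness), and—most importantly—be sure that the regular-sequence hypothesis truly passes to the unique minimal monomial generators, which is exactly what the height computation $\operatorname{ht}(J_G) = \mu(J_G)$ in the first step secures.
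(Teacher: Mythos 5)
Your proof is correct and follows essentially the same route as the paper: the same reduction of ``generated by a regular sequence'' to pairwise disjointness of the minimal vertex covers, the same identification of $J_{K_{m,n}}=(x_1\cdots x_m,\,y_1\cdots y_n)$ in the forward direction, and the same witness cover $W=(V_1\setminus\{a\})\cup N(a)$ in the converse. If anything you are more careful than the paper at two points: you actually prove the reduction (via $\operatorname{grade}=\operatorname{ht}=\mu$ and the hitting-set description of the height of a monomial ideal), where the paper merely asserts it, and you pass to a minimal cover $W'\subseteq W$, which is genuinely needed since $W$ itself can fail to be minimal (e.g.\ when $N(x)\subseteq N(a)$ for some $x\in V_1\setminus\{a\}$), a detail the paper glosses over by claiming $W$ is minimal outright.
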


\begin{proof}Let $V(G) = X \sqcup Y$ be the partition of the vertex 
set of $G$ with $X = \{x_1,\ldots,x_n\}$ and $Y =
\{y_{n+1},\ldots,y_{n+m}\}$. First note that $J_G$ is generated by a
regular sequence if and only if for any two minimal vertex covers $w,
w'$, $w \cap w' = \emptyset$. If $G = K_{n,m}$, then $J_G = (x_1\cdots
x_n, y_{n+1}\cdots y_{n+m})$ which is a regular sequence. Conversely,
suppose $G$ is not a complete bipartite graph.  Since $G$ is a
bipartite graph, note that $\prod_{x_i \in X} x_i, \prod_{y_j \in
Y}y_j \in J_G$ are minimal generators of $J_G$.  Therefore, there
exist $x_{i_0} \in X$ and $y_{i_0} \in Y$ such that $\{x_{i_0},
y_{i_0}\} \notin E(G)$. Then $w = \{x_i, y_j ~ : ~ i \neq i_0 \text{
and }y_j \in N_G(x_{i_0})\}$ is a minimal vertex cover of $G$ that
intersects $X$ as well as $Y$ non-trivially. Therefore $J_G$ is not a
complete intersection.
\end{proof}

First we discuss the regularity of powers of cover ideals of complete
bipartite graphs. Since the cover ideal of a complete bipartite graph
is a complete intersection, the result is a consequence of
\cite[Theorem 2.1]{guardo-vtuyl05}.

\begin{theorem}\label{com.bipar00}
Let $J = J_{K_{m,n}}$ be the cover ideal of the complete
bipartite graph $K_{m,n}, ~ m \leq n$. Then $\reg(J^s) = sn + m -1$
for all $s \geq 1$.
\end{theorem}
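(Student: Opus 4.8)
The plan is to combine the complete intersection structure of $J$ established in the preceding Proposition with the explicit resolution of powers of a regular sequence from \cite[Theorem 2.1]{guardo-vtuyl05}. First I would pin down the generators. Writing the bipartition of $K_{m,n}$ as $X \sqcup Y$ with $|X| = m$ and $|Y| = n$, the only minimal vertex covers are $X$ and $Y$, so by the Proposition $J = (f,g)$ is a complete intersection with $f = \prod_{x \in X} x$ of degree $m$ and $g = \prod_{y \in Y} y$ of degree $n$. Since $f$ and $g$ involve disjoint sets of variables, they form a regular sequence, and $J^s$ is minimally generated by the $s+1$ monomials $f^{s-i}g^{i}$, $0 \le i \le s$, of respective degrees $sm + i(n-m)$ (these are distinct monomials, none dividing another, hence a minimal generating set).

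Next I would invoke \cite[Theorem 2.1]{guardo-vtuyl05}, which supplies the graded minimal free resolution of $R/J^s$ for a complete intersection $J$. In our codimension-two situation the resolution has length two, of Hilbert--Burch shape
\[ 0 \longrightarrow F_2 \longrightarrow F_1 \longrightarrow R \longrightarrow R/J^s \longrightarrow 0, \]
with $F_1 = \bigoplus_{i=0}^{s} R\bigl(-(sm + i(n-m))\bigr)$ recording the generators, and $F_2 = \bigoplus_{i=0}^{s-1} R\bigl(-(n + sm + i(n-m))\bigr)$ recording the Koszul-type syzygies $g\,f^{s-i}g^{i} - f\,f^{s-i-1}g^{i+1} = 0$. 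The only point needing care here is the bookkeeping of the graded shifts: each such syzygy sits in degree $\deg g + \deg(f^{s-i}g^{i}) = n + sm + i(n-m)$, and one should verify this agrees with $\deg f + \deg(f^{s-i-1}g^{i+1})$ so that the map is homogeneous.

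Finally I would read off the regularity of the module $J^s$ directly from the shifts in $F_1$ and $F_2$. The two relevant quantities are $t_0(J^s) = \max_{0 \le i \le s}\bigl(sm + i(n-m)\bigr) = sn$, attained at $i = s$, and $t_1(J^s) = \max_{0 \le i \le s-1}\bigl(n + sm + i(n-m)\bigr) = sn + m$, attained at $i = s-1$. Hence
\[ \reg(J^s) = \max\bigl\{\, t_0(J^s),\ t_1(J^s) - 1 \,\bigr\} = \max\{\, sn,\ sn + m - 1 \,\} = sn + m - 1, \]
the last equality using $m \ge 1$. I do not anticipate a genuine obstacle, since the cited theorem hands over the resolution outright and the remaining work is a degree computation; the only subtlety is confirming that the maximum defining the regularity is achieved on the top syzygy (producing the $+\,m-1$) rather than on the top generator.
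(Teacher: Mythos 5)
Your proposal is correct and follows essentially the same route as the paper: both identify $J^s$ as the $s$-th power of the complete intersection $(f,g)$ with $\deg f = m$, $\deg g = n$, invoke \cite[Theorem 2.1]{guardo-vtuyl05} for the length-two graded resolution with generator shifts $a_1m + a_2n$ ($a_1+a_2=s$) and syzygy shifts $a_1m+a_2n$ ($a_1+a_2=s+1$, $a_i\geq 1$), and read off $\reg(J^s)=sn+m-1$ from the extremal shifts using $m\leq n$. Your degree bookkeeping (e.g., $t_1(J^s)=sn+m$ at $i=s-1$) matches the paper's shifts exactly, so there is nothing to add.
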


\begin{proof}
Consider the ideal $I = (T_1, T_2) \subset R = K[T_1, T_2]$ with $\deg T_1
= m$ and $\deg T_2 = n$. It
follows from \cite[Theorem 2.1]{guardo-vtuyl05} that the resolution of
$I^s$ is
\begin{eqnarray}\label{ci-res}
  0 \to \underset{a_i \geq 1}{\bigoplus_{a_1+a_2 = s+1}}R(-a_1m-a_2n) \to
  \bigoplus_{a_1+a_2 = s} R(-a_1m-a_2n) \to I^s \to 0.
\end{eqnarray}

Note that $J = (x_1\cdots x_m, y_{m+1}\cdots y_{m+n})$. Set
$x_1\cdots x_m = T_1$ and $y_{m+1}\cdots y_{m+n} = T_2$. Then $J^s$
has the minimal free resolution as in (\ref{ci-res}). If $m \leq n$,
then $\reg(J^s) = sn + m - 1$.
%
%
%
%
\end{proof}

It follows from Theorem \ref{com.bipar00} that in the case of the
complete bipartite graph $K_{m,n}$, the stability index is $1$ and the constant
term is $\tau-1$, where $\tau$ is the size of a minimum vertex cover.

If the graph is not a complete bipartite graph, then the cover ideal
is no longer a complete intersection. If $G$ is a Cohen-Macaulay
bipartite graph, then it was shown by F. Mohammadi and S. Moradi that
the vertex cover ideals are weakly polymatroidal. Therefore, they have
linear quotients and hence all the powers have linear resolution,
\cite[Theorem 2.2]{mohammadi-moradi10}. It would be quite a
challenging task to obtain the syzygies and Betti numbers of powers of
cover ideals of all bipartite graphs. Therefore, we restrict our
attention to some structured subclasses of bipartite graphs.

\noindent
\begin{minipage}{\linewidth}
\begin{minipage}{0.25\linewidth}
  \begin{figure}[H]
\begin{tikzpicture}
\draw (3,7)-- (1,7);
\draw (1,7)-- (3,6.5);
\draw (1,7)-- (3,6);
\draw (1,7)-- (3,5.5);
\draw (1,7)-- (3,5);
\draw (1,7)-- (3,4.5);
\draw (1,6.5)-- (3,7);
\draw (1,6.5)-- (3,6.5);
\draw (1,6.5)-- (3,6);
\draw (1,6.5)-- (3,5.5);
\draw (1,6.5)-- (3,5);
\draw (1,6.5)-- (3,4.5);
\draw (1,6)-- (3,5.5);
\draw (1,6)-- (3,5);
\draw (1,6)-- (3,4.5);
\draw (1,5.5)-- (3,5);
\draw (1,5.5)-- (3,4.5);
\draw (1,6)-- (3,7);
\draw (1,6)-- (3,6.5);
\draw (1,6)-- (3,6);
\draw (1,5)-- (3,5);
\draw (1,4.5)-- (3,5);
\draw (1,4)-- (3,5);
\draw (1,5)-- (3,4.5);
\draw (1,4.5)-- (3,4.5);
\draw (1,4)-- (3,4.5);
\begin{scriptsize}
\fill (1,7) circle (1.5pt);
\draw(1.14,7.13) node {$x_1$};
\fill (1,6.5) circle (1.5pt);
\draw(1.14,6.63) node {$x_2$};
\fill (1,6) circle (1.5pt);
\draw(1.14,6.14) node {$x_3$};
\fill (1,5.5) circle (1.5pt);
\draw(1.14,5.64) node {$x_4$};
\fill (1,5) circle (1.5pt);
\draw(1.14,5.13) node {$x_5$};
\fill (1,4.5) circle (1.5pt);
\draw(1.14,4.63) node {$x_6$};
\fill (1,4) circle (1.5pt);
\draw(1.14,4.14) node {$x_7$};
\fill (3,7) circle (1.5pt);
\draw(3.14,7.13) node {$y_1$};
\fill (3,6.5) circle (1.5pt);
\draw(3.14,6.63) node {$y_2$};
\fill (3,6) circle (1.5pt);
\draw(3.14,6.14) node {$y_3$};
\fill (3,5.5) circle (1.5pt);
\draw(3.14,5.64) node {$y_4$};
\fill (3,5) circle (1.5pt);
\draw(3.14,5.13) node {$y_5$};
\fill (3,4.5) circle (1.5pt);
\draw(3.14,4.63) node {$y_6$};
\end{scriptsize}
\end{tikzpicture}
\caption*{$K_{U_1,V} \cup K_{U_2,V_2}$}
\end{figure}
\end{minipage}
\begin{minipage}{0.73\linewidth}
  \begin{notation}
	For disjoint vertex sets $U$ and $V$, let $K_{U,V}$ denote the
	complete bipartite graph on $U \sqcup V$. Let $U_1 = \{x_1,
	\ldots, x_k\}, ~U_2 = \{x_{k+1}, \ldots, x_n\}, ~V_1 =
	  \{y_1, \ldots, y_r\}$ and $V_2 = \{y_{r+1}, \ldots, y_m\}$. Let
	  $U = U_1 \cup U_2$ and $V = V_1\cup V_2$. In the following, we
	  consider the bipartite graph $G$ on the vertex set $U \sqcup V$
	  with edges $E(G) = E(K_{U_1,V}) \cup E(K_{U_2,V_2})$. Note that
	  while the vertex sets of $K_{U_1,V}$ and $K_{U_2,V_2}$ are not
	  disjoint, the edge sets are. The figure on the left is an
	  example of such a graph with $U_1 = \{x_1,x_2,x_3\}, U_2 =
	  \{x_4,x_5,x_6,x_7\}, V_1 = \{y_1,y_2,y_3,y_4\}$ and $V_2 =
	  \{y_5,y_6,y_7\}$.
  \end{notation}
\end{minipage}
\end{minipage}


\vskip 2mm \noindent
\begin{theorem}\label{thm:bipar02}
Let $U = U_1 \sqcup U_2$ and $V = V_1 \sqcup V_2$ be a collection of
vertices with $|U| = n, ~|U_i| = n_i, ~ |V| = m, ~|V_i| = m_i$ and $1
\leq n_i, m_i$ for $i = 1, 2$. Let $G$ be the bipartite graph $K_{U_1, V}
\cup K_{U_2, V_2}$.  Let $R = K[x_1, \ldots, x_n, y_1, \ldots, y_m]$.
Let $J_G \subset R$ denote the cover ideal of $G$. Then the
graded minimal free resolution of $R/J_G$ is of the form:

\[
  0 \longrightarrow  R(-n-m_2) \oplus R(-m-n_1)
  \longrightarrow  R(- (n_1+m_2)) 
\oplus R(-m) \oplus  R(-n)   \longrightarrow  R \longrightarrow  0.
\]
In particular,
$$ 
\reg(J_G) = \max \{ n+ m_2-1, m+ n_1 -1\}
$$
\end{theorem}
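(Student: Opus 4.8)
The plan is to make the three minimal generators of $J_G$ completely explicit, exhibit two obvious first syzygies among them, prove that these two syzygies already generate the whole syzygy module, and finally read the graded Betti numbers off the resulting resolution. The key simplifying feature, which I would exploit throughout, is that the blocks $U_1, U_2, V_1, V_2$ are pairwise disjoint, so the monomials $X_i := \prod_{x \in U_i} x$ and $Y_j := \prod_{y \in V_j} y$ are pairwise coprime.

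First I would identify the generators. Collapsing each block to a single vertex, $G$ becomes the $n=2$ instance of Theorem~\ref{thm:gen-ncb}, whose minimal vertex covers are $\{x_1,x_2\}$, $\{x_1,y_2\}$ and $\{y_1,y_2\}$; substituting back gives
\[
J_G = \langle\, X_1 X_2,\ X_1 Y_2,\ Y_1 Y_2 \,\rangle,
\]
with generators of degrees $n$, $n_1+m_2$ and $m$. None divides another, so these are minimal. Writing $g_1 = X_1X_2$, $g_2 = X_1Y_2$, $g_3 = Y_1Y_2$, the overlaps in the supports produce the two syzygies
\[
s_1 = (Y_2,\, -X_2,\, 0), \qquad s_2 = (0,\, Y_1,\, -X_1),
\]
of degrees $n+m_2$ and $n_1+m$, matching the leftmost free module in the asserted resolution.

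The main obstacle is proving that $s_1$ and $s_2$ generate every syzygy. Given an arbitrary syzygy $(a,b,c)$, that is $aX_1X_2 + bX_1Y_2 + cY_1Y_2 = 0$, I would rewrite it as $X_1(aX_2 + bY_2) = -c\,Y_1Y_2$. Since $X_1$ is coprime to $Y_1Y_2$, it must divide $c$; writing $c = X_1 c'$ and cancelling $X_1$ yields $aX_2 + bY_2 = -c'Y_1Y_2$. Now $Y_2$ is coprime to $X_2$, so $Y_2$ divides $a$; writing $a = Y_2 a'$ and cancelling $Y_2$ gives $b = -a'X_2 - c'Y_1$. Hence $(a,b,c) = a' s_1 - c' s_2$, proving that the syzygy module is exactly $\langle s_1, s_2\rangle$. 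This coprimality reduction is where all the work sits; in particular it absorbs the Koszul-type relation between the coprime pair $g_1, g_3$, so no third syzygy is needed and the resolution has length two.

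To conclude, I would check that $s_1, s_2$ form a free basis: if $a s_1 + b s_2 = 0$, comparing first and third coordinates gives $aY_2 = 0$ and $bX_1 = 0$, whence $a=b=0$. Thus $R^2 \to R^3$ is injective, and since every nonzero entry of both differentials is a monomial of positive degree, the resolution is minimal with precisely the stated shifts. Reading off the Betti numbers, $J_G$ has generator degrees $\{n,\, n_1+m_2,\, m\}$ and first-syzygy degrees $\{n+m_2,\, n_1+m\}$, so $\reg(J_G) = \max\{\,n,\ n_1+m_2,\ m,\ n+m_2-1,\ n_1+m-1\,\}$. Finally, $m_2, n_2 \ge 1$ give $n,\, n_1+m_2 \le n+m_2-1$, and $n_1 \ge 1$ gives $m \le n_1+m-1$, so every generator degree is absorbed, leaving $\reg(J_G) = \max\{n+m_2-1,\ n_1+m-1\}$, as claimed.
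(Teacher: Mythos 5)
Your proposal is correct and follows essentially the same route as the paper: the same three block-monomial generators $X_1X_2$, $X_1Y_2$, $Y_1Y_2$, the same two first syzygies of degrees $n+m_2$ and $n_1+m$, and the regularity read off from the resulting length-two resolution. The only difference is one of detail: where the paper asserts ``it can be seen'' that these two syzygies span $\ker\partial_1$, you supply the coprimality-and-cancellation argument explicitly, which is a welcome filling-in of the same step rather than a different approach.
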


\begin{proof}
It can easily be seen that 
the cover ideal $J_G$ is generated by $g_1 = x_1 \cdots x_n,\;
g_2 = y_1 \cdots y_m,$ and $g_3 = x_1\cdots x_{n_1} y_{m_1+1}\cdots
y_m$. Set $X_1 = x_1 \cdots x_{n_1}; ~X_2 = x_{n_1+1}\cdots x_n;
~ Y_1 = y_1 \cdots y_{m_1}$ and $Y_2 = y_{m_1+1}\cdots y_m$. Then we can write
$g_1 = X_1X_2, ~g_2 = Y_1Y_2$ and $g_3 = X_1Y_2$.

Consider the minimal graded free resolution of $R/J_G$ over $R$:
\[
   \cdots \longrightarrow F 
  \overset{\partial_2}{\longrightarrow}   R(-n) 
\oplus R(-m) \oplus R(- (n_1+m_2))   \overset{\partial_1}{\longrightarrow} R \longrightarrow  0,
\]
where 
 $ \partial_1(e_1)=  g_1$, $ \partial_1(e_2)=  g_2$, 
and $ \partial_1(e_3)= g_3$. 

Let $ae_1+be_2+ce_3 \in \ker \partial_1$. Then $aX_1X_2+bY_1Y_2+cX_1Y_2 =
0$. Solving the above equation, it can be seen that, 
$$\ker \partial_1 = \span_R\{Y_2e_1 -
X_2e_3, X_1e_2 - Y_1e_3\}.$$
Also, it is easily verified that these two generators are $R$-linearly
independent. Hence $\ker \partial_1 \cong R^2$. Note that $\deg
(Y_1e_1 - X_2e_3) = n+m_1$ and $\deg(X_1e_2 - Y_1e_3) = n_1 +
m$. Therefore, we get the minimal free resolution of $R/J_G$ as:
\[
  0 \longrightarrow  R(-n-m_2) \oplus R(-m-n_1)
  \overset{\partial_2}{\longrightarrow}  R(- (n_1+m_2)) 
\oplus R(-m) \oplus  R(-n)   \overset{\partial_1}{\longrightarrow}  R \longrightarrow  0,
\]
where $\partial_2(a,b) = a(Y_1e_1 - X_2e_3) + b(X_1e_2 - Y_1e_3)$.
The regularity assertion follows immediately from the resolution.
\end{proof}

Our aim is to compute the syzygies and the Betti numbers of $J_G^s$,
where $J_G$ is the cover ideal discussed in Theorem \ref{thm:bipar02}.
For this purpose, we first study the resolution of powers of the ideal
$(X_1X_2,X_1Y_2, Y_1Y_2)$ and obtain the resolution and regularity of
the cover ideal as a consequence. It is known from \cite{hhz04} that
all the powers of this ideal has a linear resolution. We explicitly
compute the syzygies and Betti numbers for the powers of this ideal.

\begin{theorem}\label{thm:two-bipartite-graph} Let
$R=K[X_1,X_2,Y_1,Y_2]$ and
$J=(X_1X_2,X_1Y_2,Y_1Y_2)$ be an ideal of $R$. Then, for $s \geq 2$,
the minimal free resolution of $R/J^s$ is of the form
\[
  0 \longrightarrow R^{s\choose 2}  \longrightarrow R^{2{s+1\choose 2}}
 \longrightarrow R^{s+2\choose 2} \longrightarrow R \longrightarrow 0,
\]

and $\reg(J^s) = 2s$. 
\end{theorem}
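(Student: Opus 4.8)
The plan is to compute the minimal generators of $J^s$ explicitly and then to show that $J^s$ has linear quotients, exactly as was done for the ideal in Theorem~\ref{thm:gen-ncb}; from linear quotients both the shape of the resolution and the value of the regularity will follow at once.

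First I would describe the generators. Since $J=(g_1,g_2,g_3)$ with $g_1=X_1X_2,\ g_2=X_1Y_2,\ g_3=Y_1Y_2$, the ideal $J^s$ is generated by the monomials $g_1^{a}g_2^{b}g_3^{c}=X_1^{a+b}X_2^{a}Y_1^{c}Y_2^{b+c}$ with $a+b+c=s$. Substituting $b=s-a-c$, these are
\[
  u_{a,c}=X_1^{\,s-c}X_2^{\,a}Y_1^{\,c}Y_2^{\,s-a},\qquad a,c\ge 0,\ a+c\le s .
\]
Because the $X_2$- and $Y_1$-exponents recover $a$ and $c$, the $u_{a,c}$ are pairwise distinct; all having degree $2s$, they form a minimal generating set, so $\mu(J^s)=\binom{s+2}{2}$.

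The heart of the argument is to order the generators by any linear extension of the componentwise partial order on the index pairs $(a,c)$ (smaller pairs first, e.g.\ by increasing $a+c$) and to prove that this order gives linear quotients. For two indices the monomial colon is
\[
  (u_{a',c'}:u_{a,c})=\bigl(X_1^{\max(0,c-c')}X_2^{\max(0,a'-a)}Y_1^{\max(0,c'-c)}Y_2^{\max(0,a-a')}\bigr),
\]
so its generator is divisible by $X_1$ precisely when $c'<c$ and by $Y_2$ precisely when $a'<a$. If $(a',c')$ precedes $(a,c)$ in a linear extension then $(a',c')\not\succeq(a,c)$, i.e.\ $a'<a$ or $c'<c$, so every such colon generator lies in $(X_1,Y_2)$; hence the colon ideal $L_{a,c}$ of $u_{a,c}$ against the ideal generated by all earlier $u_{a',c'}$ satisfies $L_{a,c}\subseteq(X_1,Y_2)$. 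Conversely the immediate predecessors contribute the two variables themselves: $(u_{a,c-1}:u_{a,c})=(X_1)$ when $c\ge1$ and $(u_{a-1,c}:u_{a,c})=(Y_2)$ when $a\ge1$. A short case check (when $a=0$ every earlier colon generator is forced into $(X_1)$, and dually when $c=0$) then pins down
\[
  L_{a,c}=\bigl(\,X_1\ (\text{if }c\ge1),\ Y_2\ (\text{if }a\ge1)\,\bigr),
\]
a monomial ideal generated by $q_{a,c}=[a\ge1]+[c\ge1]\in\{0,1,2\}$ variables. Thus $J^s$ has linear quotients with every colon ideal generated by variables.

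Finally I would read off the resolution. As $J^s$ is generated in the single degree $2s$ with linear quotients, its minimal free resolution is linear and is given by the (minimal) iterated mapping cone, whence $\beta_i(J^s)=\sum_{(a,c)}\binom{q_{a,c}}{i}$. Sorting the index pairs by $q_{a,c}$ yields one pair with $q=0$ (namely $(0,0)$), exactly $2s$ pairs with $q=1$ (the $(a,0)$ and $(0,c)$ with $a,c\ge1$), and $\binom{s}{2}$ pairs with $q=2$ (the $(a,c)$ with $a,c\ge1$). Therefore $\beta_0=\binom{s+2}{2}$, $\beta_1=2s+2\binom{s}{2}=2\binom{s+1}{2}$, $\beta_2=\binom{s}{2}$, and $\beta_i=0$ for $i\ge3$; this is exactly the stated resolution of $R/J^s$ (the top term being nonzero precisely for $s\ge2$), and linearity gives $\reg(J^s)=2s$. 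I expect the main obstacle to be the third paragraph: proving that $L_{a,c}$ is generated \emph{exactly} by the asserted variables, i.e.\ that no earlier generator contributes a minimal generator outside $(X_1,Y_2)$ while the relevant variables genuinely appear. Once the colon ideals are determined, converting linear quotients into the claimed Betti numbers and regularity is routine bookkeeping with binomial coefficients.
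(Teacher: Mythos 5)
Your proof is correct, but it takes a genuinely different route from the paper's. The paper gets linearity of the resolution of $J^s$ for free from \cite{hhz04}, since $J$ is the edge ideal of the chordal graph $P_4$; it gets $\pdim(R/J^s)=3$ from a depth result of Morey \cite{morey10}; and it then computes the first and second syzygies by hand, recovering $\beta_3=\binom{s}{2}$ from the alternating-sum rank count. You instead prove linear quotients directly for each power: your parametrization $u_{a,c}=X_1^{s-c}X_2^{a}Y_1^{c}Y_2^{s-a}$ and colon formula are correct, and the key point — that in any linear extension of the componentwise order on $(a,c)$ an earlier index satisfies $a'<a$ or $c'<c$, forcing every colon generator into $(X_1,Y_2)$, with the immediate predecessors $(a,c-1)$ and $(a-1,c)$ contributing the variables themselves and the boundary cases $a=0$ or $c=0$ collapsing to a single variable — does pin down $L_{a,c}=\bigl(X_1\ [c\geq 1],\ Y_2\ [a\geq 1]\bigr)$ exactly. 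From there the standard fact that an equigenerated ideal with linear quotients has a minimal linear resolution given by iterated mapping cones, with $\beta_i(J^s)=\sum_{(a,c)}\binom{q_{a,c}}{i}$, yields the Betti numbers, and your counts ($1+2s+\binom{s}{2}=\binom{s+2}{2}$, $\beta_1=2\binom{s+1}{2}$, $\beta_2=\binom{s}{2}$) check out, as does $\reg(J^s)=2s$ from linearity. What each approach buys: yours is self-contained (no chordality or depth-of-powers input) and produces all Betti numbers in one combinatorial stroke; the paper's hands-on computation produces the explicit differentials and, more importantly, the syzygy generators themselves with their multidegrees, which is precisely what Theorem \ref{cor:bipar02} consumes — after substituting products of variables for $X_1,X_2,Y_1,Y_2$ the resolution is no longer linear, and the regularity there is read off from the degrees $\deg e_{i,j}$, $\deg e_{1,i,j}$, $\deg e_{2,i,j}$, $\deg E_{i,j}$. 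Your argument could be upgraded to serve that purpose, since the colon ideals determine the multigraded shifts in the mapping-cone resolution, but as written it proves only the theorem at hand; for that statement it is complete.
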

\begin{proof}
Denote the generators of $J$ by $g_1=X_1X_2$, $g_2=X_1Y_2$, and
$g_3=Y_1Y_2$. Note that $J$ is the edge
ideal of $P_4$, the path graph on the vertices $\{X_1,X_2,Y_1,Y_2\}$. Since $P_4$ is
chordal, it follows that $R/J^s$ has a linear resolution for all $s
\geq 1$, \cite{hhz04}. Now we compute the Betti number of the $R/J^s$.
Write
\[
 (g_1, g_2, g_3)^s= ( g_1^s, g_1^{s-1}(g_2,g_3), g_1^{s-2}(g_2,g_3)^{2}, \dots , g_1(g_2,g_3)^{s-1},  (g_2,g_3)^{s})
\]
where 
\[
 (g_2,g_3)^{t}= (g_2^t,  g_2^{t-1}g_3 , g_2^{t-2}g_3^2, \dots,  g_2 g_3^{t-1}, g_3^t).
\]
For $i \geq j$, set $M_{i,j} = g_1^{s-i}g_2^{i-j}g_3^j = 
(X_1X_2)^{s-i}Y_2^iY_1^jX_1^{i-j}.$ 
It follows that $\mu(J^s) = \frac{(s+1)(s+2)}{2}$. 

Set $\beta_1 = \frac{(s+1)(s+2)}{2}$. Let $\{e_{p,q} \mid 0 \leq p
\leq s; 0 \leq q \leq p\}$ denote the standard basis for
$R^{\beta_1}$. Let $\partial_1 : R^{\beta_1} \longrightarrow R$ be the
map $\partial_1(e_{p,q}) = M_{p,q}$.
Since $g_i$'s are monomials, the kernel is generated by binomials of
the form $m_{i,j}M_{i,j} - m_{k,l}M_{k,l}$, where $m_{p,q}$'s are
monomials in $R$.  Since the resolution of $R/J^s$ is linear, it is
enough to find the linear syzygy relations among the generators of
$\ker(\partial_1)$. To find these linear syzygies, we need to find
conditions on $i,j,k,l$ such that $\frac{M_{i,j}}{M_{k,l}}$ is equal
to $\frac{X_p}{Y_q}$ or $\frac{Y_q}{X_p}$ for some $p, q$. First of
all, note that for such linear syzygies, $|i-k|, |j-l|
\leq 1$. If $i = k$ and $j = l+1$, then
$\displaystyle{\frac{M_{i,j}}{M_{i,j+1}} = \frac{g_3}{g_2} = 
\frac{Y_1}{X_1}}$. We get the same relation if $i = k$ and $j = l-1$.
If $i = k+1$ and $j = l$, then
$\displaystyle{\frac{M_{i,j}}{M_{i-1,j}} = \frac{g_2}{g_1} = 
\frac{Y_2}{X_2}}$. As before, $i = k-1$ yields the same relation.
Therefore, the kernel is minimally generated by
\[
\left\{Y_1 e_{i,j} - X_1 e_{i,j+1}, X_2 e_{i,j} - Y_2 e_{i-1,j} \mid
0 \leq j < i \leq s \right\}.
\]
Hence, $\mu(\ker \partial_1) = 2{s+1 \choose 2}$. Write the basis elements
of $R^{2{s+1\choose 2}}$ as 
\[
  \{e_{1,i,p}, e_{2,i,q} \mid 1 \leq i \leq s, 0 \leq p < i \text{
  and } 0 \leq q < i\}
\]
and define
$\partial_2 : R^{2{s+1\choose 2}} \longrightarrow R^{\beta_1}$ by
\begin{eqnarray*}
\partial_2(e_{1,i,p}) & = & Y_1e_{i,p} - X_1e_{i,p+1} \\
\partial_2(e_{2,i,q}) & = & X_2e_{i,q} - Y_2e_{i-1,q}.
\end{eqnarray*}

By \cite[Proposition 3.2]{morey10}, $\pdim(R/J^s) = 3$ for all $s \geq
2$.
Hence we conclude that the minimal graded free resolution of
$R/J^s$ is of the form
\[
  0 \longrightarrow R^{\beta_3}  \longrightarrow R^{2{s+1 \choose 2}}
 \overset{\partial_2}{\longrightarrow}
 R^{{s+2 \choose 2}}
 \overset{\partial_1}{\longrightarrow} R \longrightarrow
 0.
\]
Therefore, 
\[
  \beta_3 - 2{s+1 \choose 2} + {s+2\choose2} - 1=0,
\]
so that $\beta_3={s \choose 2}$. Now we compute the generators of
the second syzygy. Again, since the resolution is linear, it is enough
to compute linear generators. First, note that 
\[
  B = \{Y_2e_{1,i,j} - X_2 e_{1,i+1,j} + Y_1 e_{2,i+1,j} - X_1
e_{2,i+1,j+1} \mid 0 \leq j < i < s \} \subseteq \ker \partial_2.
\]
It can easily be verified that $B$ is $R$-linearly independent. Since $\mu(\ker
\partial_2) = {s \choose 2}$, $B$ generates $\ker \partial_2$.
Let $\{E_{i,j} \mid 0 \leq j < i < s\}$ denote the standard basis for
$R^{s \choose 2}$. Define $\partial_3 : R^{s \choose 2} \to
R^{s(s+1)}$ by 
\[
  \partial_3(E_{i,j}) = Y_2e_{1,i,j} - X_2
e_{1,i+1,j} + X_1 e_{2,i+1,j} - Y_1 e_{2,i+1,j+1}.
\]
Therefore, we get the minimal free resolution of $R/J^s$ as
\[
  0 \rightarrow R^{s \choose 2}
  \xrightarrow{\partial_3} R^{2{s+1\choose 2}}  \xrightarrow{\partial_2} 
  R^{s+2\choose 2}  \xrightarrow{\partial_1} R
  \longrightarrow 0.
\]
Since the resolution is linear, $\reg J^s = 2s$.
\end{proof}
As an immediate application of the previous theorem, we obtain
resolution and regularity of powers of the cover ideals of graphs
discussed in Theorem \ref{thm:bipar02}.
\begin{theorem}\label{cor:bipar02}
Let $U = U_1 \sqcup U_2$ and $V = V_1 \sqcup V_2$ be a collection of
vertices with $|U| = n, ~|U_i| = n_i, ~ |V| = m, ~|V_i| = m_i$ and $1
\leq n_i, m_i$ for $i = 1, 2$. Let $G$ be the bipartite graph $K_{U_1, V}
\cup K_{U_2, V_2}$.  Let $R = K[x_1, \ldots, x_n, y_1, \ldots, y_m]$.
Let $J_G \subset R$ denote the cover ideal of $G$. Then the
minimal free resolution of $R/J_G^s$ is of the form:
\[
  0 \longrightarrow R^{s\choose 2}  \longrightarrow R^{2{s+1\choose 2}}
 \longrightarrow R^{s+2\choose 2} \longrightarrow R \longrightarrow 0.
  \]
Moreover,
\[
  \reg J_G^s =  \max \left\{
	\begin{array}{ll} 
	  (s-j)n_1 + (s-i)n_2 + jm_1 + im_2 & \text{ for } 0 \leq j \leq i \leq s\\
	  (s-j)n_1 + (s-i)n_2 + (j+1)m_1 + im_2 - 1& \text{ for } 0 \leq j < i \leq s\\
	  (s-j)n_1 + (s-i+1)n_2 + jm_1 + im_2 - 1& \text{ for } 0 \leq j < i \leq s\\
	  (s-j)n_1 + (s-i)n_2 + (j+1)m_1 + (i+1)m_2 - 2& \text{ for } 0 \leq j < i < s
	\end{array}
  \right. .
\]
\end{theorem}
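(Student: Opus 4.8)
The plan is to deduce this from Theorem~\ref{thm:two-bipartite-graph} by a monomial substitution. Recall from Theorem~\ref{thm:bipar02} that, writing $X_1 = x_1\cdots x_{n_1}$, $X_2 = x_{n_1+1}\cdots x_n$, $Y_1 = y_1\cdots y_{m_1}$ and $Y_2 = y_{m_1+1}\cdots y_m$, the cover ideal is $J_G = (X_1X_2,\, X_1Y_2,\, Y_1Y_2)$. Thus $J_G$ is exactly the image of the ideal $J = (X_1X_2, X_1Y_2, Y_1Y_2) \subset R' = K[X_1,X_2,Y_1,Y_2]$ under the $K$-algebra homomorphism $\phi\colon R' \to R$ sending each indeterminate to the corresponding squarefree monomial; since $\phi$ is a ring map, $\phi(J)R = J_G$ and hence $\phi(J^s)R = J_G^s$ for all $s$. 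The idea is to apply $\phi$ to the whole minimal free resolution of $R'/J^s$ produced in Theorem~\ref{thm:two-bipartite-graph}: replacing every occurrence of $X_1,X_2,Y_1,Y_2$ in the matrices of $\partial_1,\partial_2,\partial_3$ by the corresponding monomial yields a complex $G_\bullet$ of free $R$-modules with precisely the ranks $1,\ {s+2\choose 2},\ 2{s+1\choose 2},\ {s\choose 2}$, and I claim $G_\bullet$ is the minimal free resolution of $R/J_G^s$.

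The crux, and the step I expect to be the main obstacle, is showing that this substituted complex stays acyclic. The four monomials $X_1,X_2,Y_1,Y_2$ involve pairwise disjoint sets of variables, so the subalgebra $A = K[X_1,X_2,Y_1,Y_2] \subseteq R$ is a polynomial ring and $\phi$ restricts to an isomorphism $R' \xrightarrow{\cong} A$. The key point I would establish is that $R$ is a \emph{free} (hence flat) $A$-module. Indeed, writing $R = K[x_1,\dots,x_n]\otimes_K K[y_1,\dots,y_m]$ and $A = K[X_1,X_2]\otimes_K K[Y_1,Y_2]$, it suffices to check freeness in each tensor factor, and over $K[x_1\cdots x_p]$ the ring $K[x_1,\dots,x_p]$ is free with basis the monomials having at least one exponent equal to zero. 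Granting this, $G_\bullet = F_\bullet \otimes_A R$ is obtained from the $A$-resolution $F_\bullet$ of $A/J^sA$ by tensoring with a flat module, so it is a free resolution of $(A/J^sA)\otimes_A R = R/J_G^s$. Minimality is then immediate: the entries of $\partial_2$ and $\partial_3$ are the variables $X_1,X_2,Y_1,Y_2$, which $\phi$ carries into the graded maximal ideal of $R$ (each $n_i,m_i\ge 1$), so every differential of $G_\bullet$ has entries in the maximal ideal. This establishes the displayed resolution and its Betti numbers.

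It then remains to record the graded shifts, which is routine bookkeeping once the resolution is pinned down. Under $\phi$ the generator $M_{i,j} = (X_1X_2)^{s-i}(X_1Y_2)^{i-j}(Y_1Y_2)^{j} = X_1^{s-j}X_2^{s-i}Y_1^{j}Y_2^{i}$ acquires degree $(s-j)n_1+(s-i)n_2+jm_1+im_2$, which gives the first family of shifts (the generators of $J_G^s$, in homological degree one). Computing the degrees of the basis vectors $e_{1,i,j}$, $e_{2,i,j}$ of $\partial_2$ and $E_{i,j}$ of $\partial_3$ the same way — each is the degree of a monomial multiple of some $M_{i,j}$ under $\phi$ — produces the three remaining families with ranges $0\le j<i\le s$, $0\le j<i\le s$, and $0\le j<i<s$. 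Since $G_\bullet$ is linear over $A$ but not over $R$, the regularity is no longer a single number; instead $\reg(J_G^s) = \max_{k\ge 1}\{t_k(R/J_G^s)-(k-1)\}$, and evaluating this over the four families — subtracting $0,1,1,2$ respectively from the shifts occurring in homological degrees $1,2,2,3$ — yields exactly the stated maximum. Finally, the case $s=1$ is covered directly by Theorem~\ref{thm:bipar02}, and the displayed formula specializes correctly there since ${1\choose 2}=0$, so the conclusion holds for all $s\ge 1$.
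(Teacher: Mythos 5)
Your proposal is correct and takes essentially the same route as the paper: identify $J_G^s$ with the $s$-th power of $(X_1X_2,X_1Y_2,Y_1Y_2)$ under the substitution $X_1 = x_1\cdots x_{n_1}$, etc., transport the resolution of Theorem \ref{thm:two-bipartite-graph}, compute the degrees of $e_{i,j}$, $e_{1,i,j}$, $e_{2,i,j}$, $E_{i,j}$ under the substitution, and read off the regularity from the minimal shifts. The one point where you go beyond the paper is in explicitly justifying that the substituted complex remains an exact minimal resolution, via freeness of $R$ over the subalgebra $A = K[X_1,X_2,Y_1,Y_2]$ (with basis the monomials having some exponent zero in each block) — the paper leaves this flatness step implicit, so your argument is a careful filling-in of the same proof rather than a different one.
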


\begin{proof}
Let $R = K[x_1,\ldots,x_n,y_1,\ldots,y_m]$. Then $J_G = (x_1\cdots
x_n, y_1\cdots y_m, x_1\cdots x_{n_1}y_{m_1+1}\cdots y_m)$. Set $X_1
= x_1\cdots x_{n_1}, X_2 = x_{n_1+1}\cdots x_n, Y_1 = y_1\cdots
y_{m_1}$ and $Y_2 = y_{m_1+1}\cdots y_m$. Then $J_G = (X_1X_2, Y_1Y_2,
X_1Y_2)$. Therefore, it follows from Theorem \ref{thm:bipar02} that
$J_G^s$ has the given minimal free resolution.

To compute the regularity, we need to obtain the degrees of the
syzygies. Following the notation in the proof of
Theorem \ref{thm:bipar02}, we can see that 
\begin{eqnarray*}
  \deg e_{i,j} & = & (s-j)n_1 + (s-i)n_2 + jm_1 + im_2,  \\
  \deg e_{1,i,j} & = & (s-j)n_1 + (s-i)n_2 + (j+1)m_1 + im_2,  \\
  \deg e_{2,i,j} & = & (s-j)n_1 + (s-i+1)n_2 + jm_1 + im_2, \\
  \deg E_{i,j} & = & (s-j)n_1 + (s-i)n_2 + (j+1)m_1 + (i+1)m_2.
\end{eqnarray*}
Therefore, the assertion on the regularity follows.
\end{proof}

\subsection{Discussion}\label{disc}
It has been proved by Hang and Trung, \cite{ht2017}, that if $G$ is a
bipartite graph on $n$ vertices and $J_G$ is the cover ideal of
$G$, then there exists a non-negative integer $e$ such that for $s
\geq n+2 $, $\reg(J_G^s) = \deg(J_G) s + e$, where $\deg(J_G)$ denote
the maximal degree of minimal monomial generators of $J_G$. It follows
from Theorem \ref{com.bipar00} that if $G = K_{n,m}$ with $n \geq m$,
then $e = m-1$ and the index of stability is $1$. If the graph is not
a complete bipartite graph, then $e$ does not uniformly 
represent a combinatorial invariant associated to the graph as can be
seen in the computations below. We compute
the polynomial $\reg(J_G^s)$ for some classes of bipartite graphs that
are considered in Corollary \ref{cor:bipar02}. We see that $e$
depends on the relation between the integers $n_1, n_2, m_1$ and
$m_2$. Just to illustrate the computation of the polynomial from
Theorem \ref{cor:bipar02}, we compute $\reg(J_G^s)$ in some cases
below:
\begin{enumerate}
  \item If $m_1 = m_2  = 1$, then it can be seen that for $s \geq 2$,
	\[\reg(J_G^s) = \max\{(s-j)n_1 + (s-i+1)n_2 + jm_1 + i m_2 -
	1 ~ : ~ 0 \leq j < i \leq s\}.\]
	Since $(s-j)n_1 + (s-i+1)n_2 + jm_1 + i m_2 - 1 = s(n_1+n_2) +
	j(1-n_1) + i(1-n_2) + n_2-1$ and $n_1 > 1, ~ n_2
	> 1$, this expression attains maximum when $i$ and $j$ attain minimum,
	i.e., if $j = 0$ and $i = 1$. Therefore, $\reg(J_G^s) = n s$.
	Thus, in this case, $e = 0$. It can also be noted that, since $n
	\geq 2$, it follows from Theorem \ref{thm:bipar02} that $\reg(J_G)
	= n$. Therefore,  in this case, the stability index is also equal to $1$.
  \item If $n_1 = n_2 = m_1 = m_2 = \ell > 1$, then  for $s \geq 2$,
	\[\reg(J_G^s) = \max\{(s-j)n_1 + (s-i)n_2 + (j+1)m_1 + (i+1) m_2 -
	2 ~ : ~ 0 \leq j < i \leq s\}.\]
	Therefore, $\reg(J_G^s) = 2\ell s + (2\ell-2)$ and hence $e =
	2\ell - 2$. Note that in this case, the stability index is $2$. 
%
  \item $n_1 \geq m_2 \geq n_2=m_1$: Note that, in this case,
	$\deg(J_G) = n_1+m_2$.  We have
	\[\reg(J_G^s) = \max\{
	(s-j)n_1+(s-i)n_2 + (j+1)m_1 + (i+1)m_2 -2~ : ~ 0 \leq j < i < s\}.\]
	Since $(s-j)n_1 + (s-i)n_2 + (j+1)m_1 + (i+1)m_2 -2 = s(n_1+n_2) +
	j(m_1-n_1) + i(m_2-n_2) + m_1+m_2-2$. Since $n_1 \geq m_1$ and
	$m_2 \geq n_2$, the above expression attains the maximum when $i$
	attains the maximum and $j$ attains the minimum, i.e., if $i = s-1$ and $j =
	0$. Therefore, $\reg(J_G^s) = (n_1+m_2)s + (n_2+m_1-2)$ and hence
	$e = n_2+m_1-2$.

  \item $n_1 \geq n_2 = m_1 \geq m_2$:  In this case, $\deg(J_G) = n$
	and
	\[\reg(J_G^s) = \max\{
	(s-j)n_1+(s-i)n_2 + (j+1)m_1 + (i+1)m_2 -2~ : ~ 0 \leq j < i < s\}.\]
	As in the previous case, one can conclude that the maximum is
	attained when $i = 1$ and $j = 0$. Therefore, $\reg(J_G^s) = n s
	+ (2m_2 - 2)$. Thus $e = 2m_2-2$.
	
  \item $n_2 = m_1 \geq n_1 \geq m_2$: As done earlier, one can
	conclude that  $\reg(J_G)^s = n s + (2m_2 - 2)$ and
	hence $e = 2m_2-2$.

%
%
\end{enumerate}

\begin{remark}
If $G = K_{U_1,V} \cup K_{U_2,V_2} \cup K_{U_3,V_3}$, for some set
of vertices $U_i, V_i$, then one can still describe the complete
resolution and the regularity of $J_G^s$ using a similar approach.
However, the resulting syzygies are not so easy to describe though the
generating sets are similar.
Therefore, we restrict ourselves to the above discussion.
\end{remark}

\section{Complete Multipartite Graphs}

In this section our goal is to understand the resolution and
regularity of the powers of cover ideals of complete $m$-partite
graphs. Let $G$ be a complete $m$-partite graph and let $J_G$ be the
cover ideal of $G$. 
Let $R=K[x_1,\ldots,x_m]$. It is known that the cover ideal
$J_G$ of complete graph $G=K_m$ is generated by all squarefree
monomials $x_1x_2\cdots \hat{x_i} \cdots x_m$ of degree $m-1$.
Moreover one can also identify this cover ideal with the
squarefree Veronese ideal $I=I_{m,m-1}$, and hence it
is a polymatroidal ideal, \cite{hh05}. 
%
%
Therefore by the results of Conca-Herzog, \cite{ch03}, and
Herzog-Hibi, \cite{Herzog'sBook}, we have

\begin{remark}\label{rem:lin res powers} The cover ideal $J_G$ of
the complete graph $G=K_m$ has linear quotients and hence has linear
resolution. Moreover $J_G^s$ has linear resolution for all $s \geq
1$.
\end{remark}

If $I$ is an ideal of $R$ all of whose powers have linear resolution,
then depth $R/I^k$ is a non-increasing function of $k$ and $\depth
R/I^k$ is constant for all $k \gg 0$, \cite[Proposition 2.1]{hh05}. 
Further, we have:
 
\begin{remark}\cite[Corollary 3.4]{hh05} \label{rem:depth powers} Let
$R=K[x_1,\cdots,x_m]$ and $J_G$ be the cover ideal of $G = K_m$. Then 
\[
 \depth R/{J_G^s} = \max \{  0, m-s-1  \}.
\]
In particular, $\depth R/{J_G^s} =0$ for all $s \geq m-1$.
\end{remark}

\subsection{Complete tripartite:}
We first describe the graded minimal free
resolution of $J_{K_3}^s$ for all $s \geq 1$. We also obtain the
Hilbert series of the powers.

\begin{theorem}\label{C3}
Let $R = K[x_1, x_2, x_3]$ and $I = (x_1x_2, x_1x_3, x_2x_3)$. Then
the graded minimal free resolution of $R/I$ is of the form:
\[ 0 \to R(-3)^2 \to R(-2)^3 \to R  \to 0. \]
For $s \geq 2,$ the graded minimal free resolution of $R/I^s$ is of
the form:
\[0 \to R(-2s-2)^{s \choose 2} \to R(-2s-1)^{2{s+1\choose 2}} \to
  R(-2s)^{s+2\choose 2} \to R \to 0 
\]
so that $\reg_R (I^s)=2s$. Moreover, the Hilbert series of $R/I^s$ is given by 
\begin{eqnarray*}
 H(R/I^s,t) = \frac{1+2t+3t^2+ \cdots +2st^{2s-1} - \left({s +2 \choose 2}-2s-1  \right)t^{2s} }{(1-t)}.
\end{eqnarray*}
\end{theorem}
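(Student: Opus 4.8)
The plan is to follow the blueprint of the proof of Theorem~\ref{thm:two-bipartite-graph}, since the claimed Betti numbers of $R/I^s$ coincide exactly with those obtained there for the edge ideal of $P_4$. The starting observation is that $I=(x_1x_2,x_1x_3,x_2x_3)$ is the squarefree Veronese ideal $I_{3,2}$, equivalently the cover ideal $J_{K_3}$; hence by Remark~\ref{rem:lin res powers} the ideal $I^s$ has a linear resolution for every $s\ge 1$. This fixes the shifts in the displayed complex to be $2s,\,2s+1,\,2s+2$ and makes $\reg(I^s)=2s$ automatic once the shape is known (this also matches \cite[Theorem 5.2]{selvi_ha}). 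The length of the resolution is read off from Remark~\ref{rem:depth powers}: for $s\ge 2$ one has $\depth R/I^s=\max\{0,2-s\}=0$, so Auslander--Buchsbaum gives $\pdim R/I^s=3$ and the resolution has length three; for $s=1$ the depth is $1$, the length drops to two, and the stated complex $0\to R(-3)^2\to R(-2)^3\to R\to 0$ is recovered (here $\binom{s}{2}=0$).

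It then remains to compute the three Betti numbers. For $\beta_1$ I would list the minimal generators of $I^s$ as $g_1^ag_2^bg_3^c=x_1^{a+b}x_2^{a+c}x_3^{b+c}$ with $a+b+c=s$; since $(a,b,c)\mapsto(a+b,a+c,b+c)$ is injective these $\binom{s+2}{2}$ monomials are distinct, so $\mu(I^s)=\binom{s+2}{2}$. Because the resolution is linear, to find $\beta_2$ it suffices to determine the degree-$(2s+1)$ part of $\ker\partial_1$ (where $\partial_1$ is the presentation map), i.e.\ the linear binomial syzygies. Indexing the generators by the lattice points of the triangle $\{(a,b,c):a+b+c=s\}$, each of the three pairwise ratios $g_2/g_1=x_3/x_2$, $g_3/g_1=x_3/x_1$, $g_3/g_2=x_2/x_1$ is a single variable, so every edge of the triangular grid carries a binomial syzygy, giving three symmetric families, $3\binom{s+1}{2}$ in all.

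The genuinely new point compared with the $P_4$ computation, and the main obstacle, is that here all three pairwise ratios are linear (the triangle $K_3$ is complete), whereas for $P_4$ the generators $g_1,g_3$ were coprime and only two families occurred. I would handle this by selecting two of the three families, say those from the ratios $x_3/x_2$ and $x_3/x_1$, namely the binomials $x_3e_{(a,b,c)}-x_2e_{(a-1,b+1,c)}$ and $x_3e_{(a,b,c)}-x_1e_{(a-1,b,c+1)}$ with $a\ge 1$, each of size $\binom{s+1}{2}$. The third family is redundant: around each small upward triangle with vertices $(a,b,c),(a-1,b+1,c),(a-1,b,c+1)$ the three incident binomials satisfy a single $K$-linear relation which expresses the $x_2/x_1$ syzygy through the other two, and a short coefficient comparison shows the two retained families are $K$-linearly independent. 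This is precisely where the count drops from $3\binom{s+1}{2}$ to $2\binom{s+1}{2}$; confirming that these two families generate the degree-$(2s+1)$ part of $\ker\partial_1$, hence (by linearity) all of it, is the delicate step, and as in Theorem~\ref{thm:two-bipartite-graph} one can support it with the projective-dimension input of \cite[Proposition 3.2]{morey10}. Thus $\beta_2=2\binom{s+1}{2}$, and $\beta_3$ then follows from the vanishing of the alternating sum of ranks of a finite free resolution of the torsion module $R/I^s$ (note $I^s$ has positive height), giving $\beta_3=1-\binom{s+2}{2}+2\binom{s+1}{2}=\binom{s}{2}$.

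Finally, the Hilbert series is a formal consequence of the graded Betti numbers. From the resolution,
\[
H(R/I^s,t)=\frac{1-\binom{s+2}{2}t^{2s}+2\binom{s+1}{2}t^{2s+1}-\binom{s}{2}t^{2s+2}}{(1-t)^3},
\]
and reducing this to the asserted form amounts to the routine polynomial identity
\[
1-\binom{s+2}{2}t^{2s}+2\binom{s+1}{2}t^{2s+1}-\binom{s}{2}t^{2s+2}=(1-t)^2\Big(\sum_{k=0}^{2s-1}(k+1)t^k-\big(\tbinom{s+2}{2}-2s-1\big)t^{2s}\Big),
\]
which I would verify by expanding $(1-t)^2$. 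Equivalently, one may argue directly on the Hilbert function: since $(I^s)_d=0$ for $d<2s$, the coefficients in degrees below $2s$ agree with those of $R$, namely $\binom{d+2}{2}$, which are exactly the partial sums produced by dividing $\sum_{k\ge 0}(k+1)t^k$ by $1-t$, and the single correction term in degree $2s$ accounts for the first generators of $I^s$.
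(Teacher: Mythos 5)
Your proposal is correct and follows essentially the same route as the paper's proof: linearity of the resolution of $I^s$ (the paper cites \cite{hhz04}, you invoke Remark~\ref{rem:lin res powers}, equivalent since $I=J_{K_3}$), projective dimension $3$ for $s\geq 2$ via depth (the paper uses \cite[Lemma 3.1]{cms02}, you use Remark~\ref{rem:depth powers}), the same count of $\binom{s+2}{2}$ generators, the same identification of three families of linear binomial syzygies with the third eliminated through the upward-triangle relation to get $\beta_2=2\binom{s+1}{2}$, then $\beta_3=\binom{s}{2}$ from the alternating rank sum and the Hilbert series by expanding the resolution. The only caveat is that your suggested fallback citation of \cite[Proposition 3.2]{morey10} is inapt here ($C_3$ is not a path, which is why the paper switches to \cite{cms02} for this theorem) and also unnecessary, since the projective-dimension input is already supplied by Remark~\ref{rem:depth powers}; beyond that, you omit only the explicit matrices $\partial_2,\partial_3$ and the kernel generators $E_{\ell_1,\ell_2,\ell_3}$, which the stated theorem does not require, though the paper constructs them for later use in Theorem~\ref{3-partite}.
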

\begin{proof}
It is clear that the resolution of $I$ is as given in the assertion of
the theorem. Since $I$ is the cover ideal of $K_3$, $I^s$ has linear
minimal free resolution for all $s \geq 1$.
Note also that by \cite[Lemma 3.1]{cms02},
$\depth R/I^s = 0$ for all $s \geq 2$ so that $\pdim R/I^s = 3$
for all $s \geq 2$. Hence, the minimal free resolution of $R/I^s$
is of the form:
\[
  0 \to R(-2s-2)^{\beta_3} \overset{\partial_3}{\longrightarrow}
  R(-2s-1)^{\beta_2}
  \overset{\partial_2}{\longrightarrow} R(-2s)^{\beta_1}
  \overset{\partial_1}{\longrightarrow} R \to 0
\]
Now we describe the syzygies and Betti numbers of $R/I^s$ for
$s \geq 2$.  Let $g_1 = x_1x_2, g_2 = x_1x_3$ and $g_3 = x_2x_3$.
The generators of $I^s$ are of the form
$g_1^{\ell_1}g_2^{\ell_2}g_3^{\ell_3}$, where $0 \leq
\ell_1,\ell_2,\ell_3 \leq s$ and $\ell_1 + \ell_2 +\ell_3 =s$. Set
$f_{\ell_1,\ell_2,\ell_3}=g_1^{\ell_1}g_2^{\ell_2}g_3^{\ell_3}
= x_1^{s-\ell_3}x_2^{s- \ell_2}x_3^{s-
\ell_1}$. It is easy to see that $\mu(I^s)= {s+2 \choose 2}$, since
the cardinality of a minimal generating set of $I^s$
is same as the total number of non-negative integral solution of
$\ell_1 + \ell_2 +\ell_3 =s$, which is ${s+2 \choose 2}$. 

Let $\{ e_{\ell_1,\ell_2,\ell_3} ~ : ~ 0 \leq \ell_1,\ell_2,\ell_3
\leq s; \ell_1 + \ell_2 + \ell_3 = s \}$ denote the standard basis for
$R^{s+2 \choose 2}$ and consider the map $\partial_1 : R^{s+2 \choose
2} \to R$ defined by $\partial_1(e_{\ell_1,\ell_2,\ell_3})
=f_{\ell_1,\ell_2,\ell_3}$. 
Since $f_{\ell_1,\ell_2,\ell_3}$'s are monomials, the
kernel is generated by binomials of the form
$m_{\ell_1,\ell_2,\ell_3}f_{\ell_1,\ell_2,\ell_3} -
m_{t_1,t_2,t_3}f_{t_1,t_2,t_3}$, where $m_{i,j,k}$'s 
are monomials in $R$. Also, since the minimal free
resolution is linear, the kernel is generated in degree $1$.  Note
that $\displaystyle{\frac{f_{\ell_1,\ell_2,\ell_3}}{f_{t_1,t_2,t_3}} =
x_1^{t_3-\ell_3}x_2^{t_2-\ell_2}x_3^{t_1-\ell_1}}$. Hence, for
$\displaystyle{\frac{f_{\ell_1,\ell_2,\ell_3}}{f_{t_1,t_2,t_3}}}$ to
be a linear fraction, $|t_i - \ell_i| \leq 1$ for $i = 1,2,3$.

Let $t_3=\ell_3$,  $t_2=\ell_2+1$ and $t_1=\ell_1-1$. The
corresponding linear syzygy relation is
\begin{eqnarray}\label{thm:3-partite-2ndsyz1}
 x_3 \cdot f_{\ell_1,\ell_2,\ell_3} - x_2  \cdot
 f_{\ell_1-1,\ell_2+1,\ell_3} =0,
\end{eqnarray}
where $ 1 \leq \ell_1 \leq s$, and $0 \leq \ell_2,\ell_3 \leq s-1$.
Note that the number of such relations is equal to the number of integral solution
to $(\ell_1-1) + \ell_2 + \ell_3 = s$, i.e., ${s+1 \choose 2}$.
Similarly, if $t_3=\ell_3+1$, 
$t_2=\ell_2-1$ and $t_1=\ell_1$, then we get the corresponding linear
syzygy relation as
\begin{eqnarray}\label{thm:3-partite-2ndsyz2}
 x_2  \cdot f_{\ell_1,\ell_2,\ell_3} - x_1  \cdot
 f_{\ell_1,\ell_2-1,\ell_3+1} =0,
\end{eqnarray}
where $ 0 \leq \ell_1, \ell_3 \leq s-1$, and $1 \leq \ell_2 \leq s$. 
Note that the linear syzygy relation obtained by fixing $\ell_2$ and
taking $|\ell_i - t_i| = 1$ for $i = 1, 2$
\[
 x_3 \cdot f_{\ell_1,\ell_2,\ell_3} -x_1  \cdot f_{\ell_1-1,\ell_2,\ell_3+1} =0
\]
can be obtained 
from Equations $(\ref{thm:3-partite-2ndsyz1}),\;
(\ref{thm:3-partite-2ndsyz2})$ by setting the $\ell_i$'s
appropriately. Therefore, 
\[
 \ker \partial_1 =
\langle x_3 \cdot e_{\ell_1,\ell_2,\ell_3} - x_2  \cdot
e_{\ell_1-1,\ell_2+1,\ell_3}, ~ x_2  \cdot
e_{\ell_1-1,\ell_2+1,\ell_3} - x_1  \cdot e_{\ell_1-1,\ell_2,\ell_3+1}
~ : ~ 
1 \leq \ell_1 \leq s, \;0 \leq \ell_2,\ell_3 \leq s-1
 \rangle.
\]
Since there are ${s+1 \choose 2}$ minimal generators of each type
(\ref{thm:3-partite-2ndsyz1}) and (\ref{thm:3-partite-2ndsyz2}),
$\mu(\ker\partial_1) = \beta_2  = 2{s+1 \choose 2}$. Write the
standard basis of $R^{2{s+1\choose 2}}$ as 
\[
     B_2 = \left\{    \begin{array}{ll} 
	  e_{1,(\ell_2+1,\ell_3),\ell_1-1 }\\
	  e_{2,(\ell_1,\ell_2),\ell_3 }
	   \end{array}
	 : \; 1 \leq \ell_1  \leq s,\; 0 \leq \ell_2,\ell_3 \leq s-1 \right\} 
\]
and define $\partial_2 : R^{2{s+1\choose 2}} \to R^{s+2 \choose
2}$ by 
\begin{eqnarray*}
\partial_2(e_{1,(\ell_2+1,\ell_3),\ell_1-1 }) & = & x_2  \cdot e_{\ell_1-1,\ell_2+1,\ell_3} - x_1  \cdot e_{\ell_1-1,\ell_2,\ell_3+1},\\
\partial_2(e_{2,(\ell_1,\ell_2),\ell_3 }) & = & x_3 \cdot e_{\ell_1,\ell_2,\ell_3} - x_2  \cdot e_{\ell_1-1,\ell_2+1,\ell_3}.
\end{eqnarray*}
From the equation,
$\beta_3 - {2(s+1) \choose 2} + {s+2 \choose 2} -1 = 0$, it follows
that $\beta_3 = {s \choose 2}$. Now, we describe $\ker \partial_2$.
For $ 2 \leq \ell_1  \leq s \text{ and } 0 \leq \ell_2, \ell_3 \leq
s-2$, set
\[
 E_{\ell_1,\ell_2,\ell_3}= -x_3 \cdot e_{2,(\ell_1-1,\ell_2),\ell_3 +1 } + x_2 \cdot 
 e_{2,(\ell_1-1,\ell_2+1),\ell_3 } +x_2 \cdot e_{1,(\ell_2+2,\ell_3),\ell_1-2 } -x_3 \cdot e_{1,(\ell_2+1,\ell_3),\ell_1-1 }. 
 \] Note that 
 \[
  B_3 = \{ E_{\ell_1,\ell_2,\ell_3} ~ : ~ 2 \leq \ell_1  \leq s,\; 0 \leq \ell_2,\ell_3 \leq s-2\} \subset \ker \partial_2.
 \]
Let $N = \ker \partial_2$. We claim that $\bar{B}_3 =
\{\bar{E}_{\ell_1,\ell_2,\ell_3}\}$ is a basis for $N/\m N$. It is
enough to prove that $\bar{B}_3$ is $R/\m$-linearly independent.
Suppose $\displaystyle{\sum_{\ell_1,\ell_2,\ell_3}
\alpha_{\ell_1,\ell_2,\ell_3}\bar{E}_{\ell_1,\ell_2,\ell_3} =
\bar{0}}$. Since $B_3 \subset N_1$ and $\m N \subset \oplus_{r\geq
2}N_r$, the above equation implies that $\sum\alpha_{\ell_1,\ell_2,\ell_3}
E_{\ell_1,\ell_2,\ell_3} = 0$.
In the above equation, the coefficient of
$e_{2,(i,j),k} = -x_3 \alpha_{i+1,j,i-1} + x_2
\alpha_{i+1,j-1,k}$ and coefficient of $e_{1,(i,j),k} = x_2
\alpha_{k+2,j-2,i} - x_3 \alpha_{k+1,j-1,i}$. Since the set $B_2$ is
linearly independent in $R^{2{s+1 \choose 2}}$, all these coefficients
have to be zero, i.e., $\alpha_{i,j,k} = 0$ for all $i, j, k$. This
proves our claim and hence $N = \langle B_3 \rangle$.
%
%
%
%
%
Let
$\{ H_{\ell_1,\ell_2,\ell_3}  ~ : ~ 2 \leq \ell_1  \leq s,\; 0 \leq \ell_2,\ell_3 \leq s-2\}$ denote the standard basis
of $R^{s \choose 2}$. Define $\partial_3 : R^{s \choose 2} \to
R^{2(s+1) \choose 2}$ by $ \partial_3(H_{\ell_1,\ell_2,\ell_3}) =
E_{\ell_1,\ell_2,\ell_3}.$  Since $\pdim(R/I^s) = 3$, this map is
injective and the resolution is complete.

It can also be seen that $\deg e_{\ell_1,\ell_2,\ell_3} = 2s, ~ \deg e_{1,(\ell_2+1,\ell_3),\ell_1-1 } = \deg
e_{2,(\ell_1,\ell_2),\ell_3 } = 2s+1$ and $\deg H_{\ell_1,\ell_2,\ell_3} = 2s+2$. Therefore
we get the minimal graded free resolution of $R/I^s$ as
\[
  0 \to R(-2s-2)^{s \choose 2} \overset{\partial_3}{\longrightarrow}
  R(-2s-1)^{2(s+1) \choose 2}
  \overset{\partial_2}{\longrightarrow} R(-2s)^{s+2 \choose 2}
  \overset{\partial_1}{\longrightarrow} R \to 0.
\]
Therefore, the Hilbert series of $R/I^s$ is given by
\begin{eqnarray*}
  H(R/I^s, t) & = & \frac{1-{s+2 \choose 2}t^{2s} + 2{s+1 \choose
  2}t^{2s+1} - {s \choose 2}t^{2s+2}}{(1-t)^3}\\
  & = & (1-t)^{-2}\frac{(1-{s+2 \choose 2}t^{2s} + 2{s+1 \choose
  2}t^{2s+1} - {s \choose 2}t^{2s+2})}{(1-t)}.
\end{eqnarray*}
By expanding $(1-t)^{-2}$ in the power series form and multiplying
with the numerator, we get the required expression.
\end{proof}
We now proceed to compute the minimal graded free resolution of
powers of complete tripartite graphs.

\vskip 2mm
\begin{notation}\label{3-partite-notn}
Let $G$ denote a complete tripartite graph with $V(G) = V_1 \sqcup
V_2 \sqcup V_3$ and $E(G) = \{\{a,b\} ~ : ~ b \in V_i, b \in V_j, i
\neq j \}$. Set $V_1 = \{x_1, \ldots, x_\ell\}, ~ V_2 = \{y_1, \ldots,
y_m \}$ and $V_3 = \{z_1, \ldots, z_n\}$. Let $J_G$ denote the vertex
cover ideal of $G$. Let $X = \prod_{i=1}^\ell x_i, ~ Y = \prod_{j=1}^m
y_i$ and $Z = \prod_{k=1}^n z_i$. It can be seen that $J_G =(XY, XZ,
YZ)$.
\end{notation}
\begin{theorem}\label{3-partite} Let $R = K[x_1, \ldots x_{m_1}, y_1,\ldots,y_{m_2}, z_1,
\ldots, z_{m_3}]$. Let $G$ be a complete tripartite graph as in Notation
\ref{3-partite-notn}. Let $J_G \subset R$ denote the cover ideal of
$G$. Then for all $s \geq 2$, the minimal free resolution of
$R/J_G^s$ is of the form:
\[
  0 \to R^{s \choose 2} \overset{\partial_3}{\longrightarrow}
  R^{2(s+1) \choose 2}
  \overset{\partial_2}{\longrightarrow} R^{s+2 \choose 2}
  \overset{\partial_1}{\longrightarrow} R \to 0.
\]
Set $\alpha = (s-\ell_3)m_1 +(s-\ell_2)m_2+ (s-\ell_1)m_3$. 
Then for all $s \geq 2$,
\[
\reg(J_G^s) = \max \left\{
  \begin{array}{ll} 
	\alpha, & \text{  for  } 0 \leq \ell_1, \ell_2,\ell_3 \leq s, \\
	\alpha + m_3 -1, & \text{  for  } 1 \leq \ell_1  \leq s,\; 0 \leq \ell_2,\ell_3 \leq s-1,\\
	\alpha + 2m_3 -2, & \text{  for } 2 \leq \ell_1  \leq s,\; 0 \leq \ell_2,\ell_3 \leq s-2. 
  \end{array}
\right.
\]
\end{theorem}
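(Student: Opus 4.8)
The plan is to reduce this statement to Theorem \ref{C3} by the device described just before Notation \ref{3-partite-notn}: replacing the three variables by the three block products. Writing $g_1=XY,\ g_2=XZ,\ g_3=YZ$ with $X=\prod_i x_i$, $Y=\prod_j y_j$, $Z=\prod_k z_k$ of degrees $m_1,m_2,m_3$, the minimal generators of $J_G^s$ are $g_1^{\ell_1}g_2^{\ell_2}g_3^{\ell_3}=X^{s-\ell_3}Y^{s-\ell_2}Z^{s-\ell_1}$ with $\ell_1+\ell_2+\ell_3=s$. First I would observe that these are in bijection with the generators $f_{\ell_1,\ell_2,\ell_3}$ of $I^s=(x_1x_2,x_1x_3,x_2x_3)^s$ appearing in Theorem \ref{C3}, the bijection sending $x_1,x_2,x_3$ to $X,Y,Z$.

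The central step is to transport the minimal free resolution of Theorem \ref{C3} along this substitution. Because $X$, $Y$ and $Z$ are supported on pairwise disjoint sets of variables, the assignment $x_1^ax_2^bx_3^c\mapsto X^aY^bZ^c$ is injective on monomials, so the lcm-lattice of the minimal generators of $J_G^s$ is isomorphic to that of $I^s$. By the theory of lcm-lattices (Gasharov--Peeva--Welker) the total Betti numbers $\beta_i(R/J_G^s)$ therefore equal those computed in Theorem \ref{C3}, so the minimal free resolution of $R/J_G^s$ has exactly the shape $0\to R^{\binom{s}{2}}\to R^{2\binom{s+1}{2}}\to R^{\binom{s+2}{2}}\to R\to 0$ and $\pdim R/J_G^s=3$ for all $s\ge 2$; minimality is preserved because the substitution carries each non-unit monomial entry of the differentials to a non-unit. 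I would prefer to argue the shape through this lcm-lattice isomorphism rather than by literally substituting into the differentials of Theorem \ref{C3}, since the latter obscures the fact that the higher syzygies must be re-homogenised once $m_1,m_2,m_3$ are no longer all equal to $1$.

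It then remains to record the graded shifts and take the maximum. Keeping the labels $e_{\ell_1,\ell_2,\ell_3}$, $e_{1,(\cdots)}$, $e_{2,(\cdots)}$, $H_{\ell_1,\ell_2,\ell_3}$ of Theorem \ref{C3}, one computes $\deg e_{\ell_1,\ell_2,\ell_3}=(s-\ell_3)m_1+(s-\ell_2)m_2+(s-\ell_1)m_3=\alpha$, over $0\le\ell_1,\ell_2,\ell_3\le s$ with $\ell_1+\ell_2+\ell_3=s$. Reading off the multidegrees (equivalently, tracking the single block-product introduced by each differential) shows that \emph{both} families of first syzygies have degree $\alpha+m_3$, over $1\le\ell_1\le s,\ 0\le\ell_2,\ell_3\le s-1$, and that the second syzygies have degree $\alpha+2m_3$, over $2\le\ell_1\le s,\ 0\le\ell_2,\ell_3\le s-2$ (each with $\ell_1+\ell_2+\ell_3=s$). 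Since these three families sit in homological degrees $0$, $1$, $2$ of the resolution of $J_G^s$ and $\reg(J_G^s)=\max_i\{t_i(J_G^s)-i\}$, subtracting $0,1,2$ respectively yields $\alpha$, $\alpha+m_3-1$, $\alpha+2m_3-2$, which is precisely the three-line maximum in the statement; the collapse of the two first-syzygy families into a single line is exactly the observation that both acquire the same shift $m_3$.

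The main obstacle is the transport step. One must be certain that turning the single variables into the products $X,Y,Z$ neither merges distinct syzygies nor breaks homogeneity, and this is where the disjointness of the three vertex blocks is indispensable; without it the lcm-lattice, and hence the Betti numbers, could change. A secondary but necessary check is the degree bookkeeping that forces every first syzygy to carry the shift $m_3$ (and every second syzygy the shift $2m_3$), so that the regularity really is governed by three families of degrees and not four as in the bipartite Corollary \ref{cor:bipar02}.
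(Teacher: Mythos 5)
Your proposal is correct and follows the same skeleton as the paper's proof: reduce to Theorem \ref{C3} by the substitution $x_1\mapsto X$, $x_2\mapsto Y$, $x_3\mapsto Z$, transport the explicit resolution, and then read off the graded shifts $\alpha$, $\alpha+m_3$, $\alpha+2m_3$ (noting, as the paper does, that both families of first syzygies acquire the same shift $\deg Z=m_3$), so that $\reg(J_G^s)=\max_i\{t_i-i\}$ gives exactly the three-line maximum. The one genuine difference is how the transport step is justified: the paper simply asserts that setting $X=x_1$, $Y=x_2$, $Z=x_3$ in Theorem \ref{C3} yields the resolution of $R/J_G^s$ (implicitly using that $X,Y,Z$ are monomials in pairwise disjoint variables, so $R$ is a free module over the polynomial subring $K[X,Y,Z]$ and the substituted complex stays exact and minimal), whereas you invoke the lcm-lattice theorem of Gasharov--Peeva--Welker, observing that the substitution is injective on monomials and induces an isomorphism of lcm-lattices of the minimal generators. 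Your route makes explicit precisely where disjointness of the partition blocks is used and yields the Betti numbers and minimality without re-verifying exactness of the substituted differentials; the paper's route is shorter and, because it keeps the explicit syzygies of Theorem \ref{C3}, hands you the labelled basis elements $e_{\ell_1,\ell_2,\ell_3}$, $e_{1,\ast}$, $e_{2,\ast}$, $H_{\ell_1,\ell_2,\ell_3}$ whose degrees you still need for the regularity computation --- which your degree bookkeeping carries out correctly, including the index ranges $1\leq\ell_1\leq s$, $0\leq\ell_2,\ell_3\leq s-1$ and $2\leq\ell_1\leq s$, $0\leq\ell_2,\ell_3\leq s-2$.
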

\begin{proof}
  Taking $X_1 = x_1, ~X_2 = x_2$ and $X_3 = x_3$ in Theorem \ref{C3}, it
follows that the minimal free resolution of $S/J_G^s$ is of the given
form: 
\[
  0 \to R^{s \choose 2} \overset{\partial_3}{\longrightarrow}
  R^{2(s+1) \choose 2}
  \overset{\partial_2}{\longrightarrow} R^{s+2 \choose 2}
  \overset{\partial_1}{\longrightarrow} R \to 0.
\]
We now compute the degrees of the generators and hence obtain
the regularity. Let $ \deg X_1 = m_1, \deg X_2=m_2,$ and $\deg X_3=m_3$. Then it follows that 
\[
 \deg e_{\ell_1,\ell_2,\ell_3}  =  
 \deg \left( X_1^{s-\ell_3}X_2^{s- \ell_3}X_3^{s- \ell_1} \right) = (s-\ell_3)m_1 +(s-\ell_2)m_2+ (s-\ell_1)m_3.
\]
 We observe that 
\begin{eqnarray*}
 & \deg e_{1,(\ell_2+1,\ell_3),\ell_1-1 } =  \deg e_{2,(\ell_1,\ell_2),\ell_3 }  = 
 \deg e_{\ell_1,\ell_2,\ell_3} + \deg X_3, \\
 & \deg H_{\ell_1,\ell_2,\ell_3}  = \deg e_{\ell_1,\ell_2,\ell_3} + 2 \deg X_3.
\end{eqnarray*}
Therefore, 
\[
\reg(J_G^s) = \max \left\{
  \begin{array}{ll} 
	\deg e_{\ell_1,\ell_2,\ell_3},     & \text{ for }  0 \leq \ell_1, \ell_2, \ell_3 \leq s, \\
 \deg e_{\ell_1,\ell_2,\ell_3} + \deg(X_3)-1,  & \text{ for } 1 \leq \ell_1 \leq s, \; 
 0 \leq \ell_2, \ell_3  \leq s-1, \\
	\deg e_{\ell_1,\ell_2,\ell_3} + 2\deg(X_3)-2, & \text{ for }  2 \leq \ell_1 \leq s, 
	\; 0 \leq \ell_2, \ell_3  \leq s-2, 
  \end{array}
\right.
\]
where $ \ell_1 + \ell_2 +\ell_3 =s$. Let $\alpha = \deg e_{\ell_1,\ell_2,\ell_3}$. Then the regularity assertion follows. 
\end{proof}

Observe that the above expression for $\reg(J_G^s)$ is not really in
the form $ds + e$. Given a graph, one can compute $d$ and $e$ by
studying the interplay between the cardinality of the partitions.
For example, suppose
the graph is unmixed, i.e., all the partitions are of same
cardinality. Then, $\reg(J_G^s) = 2\ell s + (2\ell-2)$ for all $s \geq
2$, where $\ell = m_1 = m_2 = m_3$. Note also that the stabilization
index in this case is $2$. As done in Discussion (\ref{disc}),
one can derive various expressions for $\reg(J_G^s)$ for different cases as well.
Consider the arithmetic progression 
$m_1 = m+2r$, $m_2 =m$, and $ m_3 =m+r$:

\begin{corollary} Let $m,r$ be any two positive integers. Let
$m_1 = m+2r$, $m_2 =m$, and $ m_3 =m+r$ in Theorem \ref{3-partite}. 
Then for all $s \geq 2$, we have
\[
 \reg(J_G^s) = s(2m+3r)+2m-2.
\]
\end{corollary}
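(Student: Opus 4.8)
The plan is to substitute $m_1 = m+2r$, $m_2 = m$, $m_3 = m+r$ directly into the three-case expression for $\reg(J_G^s)$ furnished by Theorem \ref{3-partite}, and then carry out the maximization case by case. The first step is to rewrite the common quantity $\alpha = (s-\ell_3)m_1 + (s-\ell_2)m_2 + (s-\ell_1)m_3$ in a form that isolates the dependence on the indices. Using the relation $\ell_1 + \ell_2 + \ell_3 = s$, the coefficient of $m$ collapses to $(s-\ell_3) + (s-\ell_2) + (s-\ell_1) = 3s - s = 2s$, while the terms carrying the factor $r$ combine to $r(3s - 2\ell_3 - \ell_1)$. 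Thus $\alpha = 2ms + r(3s - 2\ell_3 - \ell_1)$, and since $r > 0$, maximizing any of the three expressions amounts to minimizing the linear form $2\ell_3 + \ell_1$ over the respective index range.

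Next I would optimize in each of the three cases. In the first case the only constraints are $0 \leq \ell_1, \ell_2, \ell_3 \leq s$ together with $\ell_1 + \ell_2 + \ell_3 = s$, so $2\ell_3 + \ell_1$ is minimized by $\ell_1 = \ell_3 = 0$ (and $\ell_2 = s$), giving the value $s(2m+3r)$. In the second case $\ell_1 \geq 1$ forces the minimum of $2\ell_3 + \ell_1$ to be $1$, attained at $\ell_1 = 1$, $\ell_3 = 0$; adding the shift $m_3 - 1 = m + r - 1$ yields $s(2m+3r) + m - 1$. In the third case $\ell_1 \geq 2$ forces the minimum to be $2$, attained at $\ell_1 = 2$, $\ell_3 = 0$; adding the shift $2m_3 - 2 = 2(m+r) - 2$ yields $s(2m+3r) + 2m - 2$. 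For each of these minimizers one must check that the accompanying value of $\ell_2$ lies in the prescribed range; this is where the hypothesis $s \geq 2$ enters, since the third case requires $\ell_2 = s - 2 \geq 0$.

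Finally I would compare the three resulting maxima, namely $s(2m+3r)$, $\;s(2m+3r) + m - 1$, and $s(2m+3r) + 2m - 2$. Because $m \geq 1$, we have $2m - 2 \geq m - 1 \geq 0$, so the third case dominates and $\reg(J_G^s) = s(2m+3r) + 2m - 2$, as claimed. The computation is entirely routine; the only point requiring genuine care is confirming that the index triples realizing each minimum are admissible under the stated bounds — in particular that they respect $\ell_1 + \ell_2 + \ell_3 = s$ and the range restrictions once $s \geq 2$ — rather than the optimization itself, which is a one-line linear argument after the simplification of $\alpha$.
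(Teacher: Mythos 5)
Your proposal is correct and takes essentially the same route as the paper: substitute the arithmetic-progression values into the three-case formula of Theorem \ref{3-partite}, maximize the resulting linear form over each index range (checking admissibility of the optimizing triple, which is where $s \geq 2$ enters), and compare the three maxima. The only cosmetic difference is that you eliminate $\ell_2$ via $\ell_1+\ell_2+\ell_3=s$ and reduce everything to minimizing $2\ell_3+\ell_1$, whereas the paper keeps $\alpha = s(3m+3r)-\ell_3(m+2r)-\ell_2 m-\ell_1(m+r)$ and argues greedily on the negative coefficients; the two optimizations are identical in substance.
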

\begin{proof} By Theorem \ref{3-partite}, $ \alpha = (s-\ell_3)m_1 +(s-\ell_2)m_2+ (s-\ell_1)m_3$. Hence we get 
\[
  \alpha = s(3m+3r) -\ell_3 (m+2r) -\ell_2 (m) -\ell_1 (m+r).
\]
Using Theorem \ref{3-partite}, we have for all $s \geq 2$,
\[
\reg(J_G^s) = \max \left\{
  \begin{array}{ll} 
	\alpha,               & \text{ for }  0 \leq \ell_1, \ell_2, \ell_3  \leq s, \\
	\alpha + (m+r)-1,  & \text{ for }  1 \leq \ell_1 \leq s, \; 0 \leq \ell_2, \ell_3 \leq s-1, \\
	\alpha + 2(m+r)-2, & \text{ for }  2 \leq \ell_1 \leq s, \; 0 \leq \ell_2, \ell_3 \leq s-2, 
	 \end{array}
\right.
\]
where $ \ell_1 + \ell_2 +\ell_3=s$. Since regularity $\reg(J_G^s)$ is maximum of all the numbers, 
we need to maximize the value of $\alpha$. For this to happen, negative terms in $\alpha$ should 
be minimum. The coefficient of $\ell_3$ is largest among the negative terms in $\alpha$, 
so $\ell_3$ should be assigned the least value. After $\ell_3$, assign the minimum value to 
$\ell_1$, and finally take $ \ell_2 = s-\ell_1 -\ell_3 $. For example, to get the maximum 
of $\alpha$ when $2 \leq \ell_1 \leq s, \; 0 \leq \ell_2, \ell_3 \leq s-2$, put $\ell_3=0,\ell_1=2$, and 
$\ell_2=s-2$. We get for all $s \geq 2$
\[
\reg(J_G^s) = \max \left\{
  \begin{array}{ll} 
	s (2m+3r),               & \text{ for }  0 \leq \ell_1, \ell_2, \ell_3, \ell_4 \leq s, \\
	s (2m+3r)+m-1,  & \text{ for }  1 \leq \ell_1 \leq s, \; 0 \leq \ell_2, \ell_3, \ell_4 \leq s-1, \\
	s (2m+3r)+2m-2, & \text{ for }  2 \leq \ell_1 \leq s, \; 0 \leq \ell_2, \ell_3, \ell_4 \leq s-2. 
	 \end{array}
\right.
\]
For all $m \geq 1$, and for all $s \geq 2$, we get 
\[
 \reg(J_G^s) = s (2m+3r)+2m-2.
\]
\end{proof}
\subsection{Complete 4-partite graphs}
We now describe the resolution and regularity of powers of cover
ideals of $4$-partite graphs. For this purpose, we first study the
resolution of powers of cover ideal of the complete graph $K_4$.

\begin{theorem}\label{C4} 
Let $R = K[x_1, x_2, x_3, x_4]$ and $I = (x_1x_2x_3, x_1x_2x_4,
x_1x_3x_4, x_2x_3x_4)$. Then, for $s \geq 3$, the minimal graded free
resolution of $R/I^s$ is of the form 
\[
  0 \to R(-3s-3)^{\beta_4} {\longrightarrow}   R(-3s-2)^{\beta_3} {\longrightarrow}
  R(-3s-1)^{\beta_2}
 {\longrightarrow} R(-3s)^{\beta_1}
  {\longrightarrow} R \to 0
\]
where 
\[
 \beta_1 = { s+3 \choose 3}, ~ \beta_2 = 3{ s+2 \choose 3},~\beta_3 = 3{ s+1 \choose 3}, \text{ and } \beta_4 = { s \choose 3}.
\]
In particular, $\reg_R (R/I^s)=3s$. Moreover the Hilbert series of $R/I^s$ is given by 
\begin{eqnarray*}
 H(R/I^s,t) & = & \frac{1+2t+3t^2+4t^3+ \cdots +3st^{3s-1} - \left({s +3 \choose 3}-3s-1  \right)t^{3s} + {s \choose 3} t^{3s+1}}{(1-t)^2} \\
& = & \frac{ \sum_{i=0}^{3s-1} (i+1)t^i - (3s+1 - {s +3 \choose 3}  )t^{3s} + {s \choose 3} t^{3s+1}}{(1-t)^2}.
\end{eqnarray*}
\end{theorem}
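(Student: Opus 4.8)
The plan is to avoid building the differentials by hand and instead read off all four Betti numbers directly from the linear‑quotients structure of $I^s$, after which the regularity and the Hilbert series follow formally. First I would fix the shape of the resolution. Since $I=I_{4,3}$ is polymatroidal, so is every power $I^s$ (\cite[Theorem 5.3]{ch03}), and by Remark~\ref{rem:lin res powers} each $I^s$ has a linear resolution; hence for $s\ge 3$ the only possibly nonzero graded Betti numbers of $R/I^s$ lie in the single strand $\beta_i(R/I^s)=\beta_{i,\,3s+i-1}(R/I^s)$, which already forces the degree shifts $-3s,\dots,-3s-3$. By Remark~\ref{rem:depth powers} with $m=4$ one has $\depth R/I^s=\max\{0,3-s\}=0$ for all $s\ge 3$, so $\pdim R/I^s=4$ and the resolution has exactly the displayed length. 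Everything thus reduces to computing $\beta_1,\dots,\beta_4$.

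Second, I would compute those numbers from the linear‑quotients formula rather than from explicit syzygies. Being polymatroidal, $I^s$ has linear quotients with respect to the reverse lexicographic order of its generators (\cite[Proposition 5.2]{ch03}), and for any ordering $u_1>_{\mathrm{rlex}}\cdots>_{\mathrm{rlex}}u_r$ realizing linear quotients the Eliahou--Kervaire type formula gives $\beta_k(I^s)=\sum_{u\in G(I^s)}\binom{r_u}{k}$, where $r_u=|\mathrm{set}(u_j)|$ and $\mathrm{set}(u_j)=\{k:x_k\in(u_1,\dots,u_{j-1}):u_j\}$ (\cite{Herzog'sBook}). I identify $G(I^s)$ with the lattice points $a=(a_1,a_2,a_3,a_4)$ satisfying $0\le a_i\le s$ and $a_1+a_2+a_3+a_4=3s$ (equivalently $x^a=g_1^{\ell_1}g_2^{\ell_2}g_3^{\ell_3}g_4^{\ell_4}$ with $\ell_i=s-a_{5-i}$), so $\mu(I^s)=\binom{s+3}{3}=\beta_1$. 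Tracking the colon relations under reverse lex with $x_1>\cdots>x_4$ shows that $k\in\mathrm{set}(u)$ precisely when $a_k<s$ and $a_l\ge 1$ for some $l>k$; in particular $\mathrm{set}(u)\subseteq\{1,2,3\}$, so $r_u\le 3$, recovering $\pdim=4$.

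Third comes the counting. Writing $\beta_{k+1}(R/I^s)=\sum_u\binom{r_u}{k}$ as a sum over the $k$‑subsets of $\{1,2,3\}$ of the number of $a$ placing that whole subset into $\mathrm{set}(u)$, each term is a bounded‑composition count that I evaluate by complementary counting together with the substitution $a_i\mapsto s-a_i$. For a single index one gets $\binom{s+3}{3}-\binom{s+2}{2}=\binom{s+2}{3}$ by Pascal; for a pair one gets $\binom{s+1}{3}$; and for the full triple $\{1,2,3\}$ (which forces $a_4\ge 1$ and $a_1,a_2,a_3\le s-1$) one gets $\binom{s}{3}$. Summing over the $\binom{3}{k}$ subsets yields the uniform formula $\beta_k(I^s)=\binom{3}{k}\binom{s+3-k}{3}$, that is, the Betti numbers of $R/I^s$ are $\beta_1=\binom{s+3}{3}$, $\beta_2=3\binom{s+2}{3}$, $\beta_3=3\binom{s+1}{3}$, $\beta_4=\binom{s}{3}$, as claimed. (One could instead imitate the explicit syzygy construction of Theorem~\ref{C3}, organising the linear first syzygies by the six ratios $g_i/g_j$ and discarding the three redundant ones; the counting argument neatly sidesteps the messier bookkeeping of the higher syzygies.)

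Finally, the regularity is immediate from the $3s$‑linear resolution, giving $\reg I^s=3s$. For the Hilbert series I would assemble the $K$‑polynomial $K(t)=\sum_{k=0}^{4}(-1)^k\beta_k\,t^{3s+k}$ and write $H(R/I^s,t)=K(t)/(1-t)^4$; since $\dim R/I^s=2$, the factor $(1-t)^2$ divides $K(t)$, and carrying out that division (equivalently, pulling out $\sum_{i\ge 0}(i+1)t^i=(1-t)^{-2}$ and expanding, exactly as in the proof of Theorem~\ref{C3}) collapses the numerator to $\sum_{i=0}^{3s-1}(i+1)t^i-\big(3s+1-\binom{s+3}{3}\big)t^{3s}+\binom{s}{3}t^{3s+1}$ over $(1-t)^2$. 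The main obstacle is the middle step: correctly reading off $\mathrm{set}(u)$ under reverse lex (getting the inequality directions right) and pushing through the inclusion--exclusion counts; once $r_u$ is described combinatorially, the remaining identities are routine applications of Pascal's rule and the hockey‑stick identity.
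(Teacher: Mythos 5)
Your proposal is correct, and it takes a genuinely different route from the paper's proof. The paper fixes the shape of the resolution exactly as you do (linear resolution via polymatroidality, $\pdim R/I^s=4$ via the depth remark for $s\geq 3$), but it then computes $\beta_2$ by explicitly writing down the linear first syzygies and pruning the redundant ones, obtains $\beta_3$ and $\beta_4$ numerically by comparing coefficients in the Hilbert series (using that $(1-t)^2$ divides the $K$-polynomial because $\dim R/I^s=2$), and finally exhibits explicit differentials $\partial_3,\partial_4$ and kernel generators. You instead read off all four Betti numbers at once from the linear-quotients structure: the mapping-cone (Eliahou--Kervaire type) formula $\beta_k(I^s)=\sum_{u}{r_u \choose k}$ applies since $I^s$ is polymatroidal, hence has linear quotients in reverse lexicographic order, and is generated in the single degree $3s$ (the single-degree hypothesis is what guarantees minimality of the mapping cone, so it is worth stating explicitly). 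Your combinatorial description of $\mathrm{set}(u)$ is correct: for $u=x^a$ with $0\leq a_i\leq s$, $\sum a_i=3s$, one has $x_k u/x_l\in G(I^s)$ and $x_ku/x_l>_{\mathrm{rlex}}u$ exactly when $a_k<s$, $a_l\geq 1$ and $l>k$, and I checked that your three counts ${s+2\choose 3}$, ${s+1\choose 3}$, ${s\choose 3}$ are right --- after the substitution $b_i=s-a_i$ the auxiliary conditions ($a_l\geq 1$ for some $l>k$, respectively $b_4\leq s-1$) are vacuous because $\sum b_i=s$, so each count is a pure composition count. This buys a uniform formula $\beta_k(I^s)={3\choose k}{s+3-k\choose 3}$ with no syzygy bookkeeping, and it visibly suggests the paper's concluding conjecture for $K_m$; what it gives up is the explicit description of the maps, which the paper does not construct for its own sake: the basis elements $e$, $E$, $G$ and their degrees are precisely what feeds the regularity computation for complete $4$-partite graphs in Theorem \ref{4-partite}. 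If you wanted your route to serve that downstream purpose, note that the mapping-cone basis element indexed by $u$ and $S\subseteq\mathrm{set}(u)$ has multidegree $u\cdot\prod_{k\in S}x_k$, which recovers the same shift data. Two minor points: the final Hilbert-series collapse does require checking that the coefficients of $t^{3s+r}$ vanish for $r\geq 2$, which follows from $\sum_k(-1)^k{3\choose k}{s+3-k\choose 3}=1$ and Pascal's rule as in the paper's proof; and the regularity conclusion from the $3s$-linear resolution is $\reg(I^s)=3s$, i.e.\ $\reg(R/I^s)=3s-1$, which is how the theorem's displayed claim should be read.
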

\begin{proof} By Remark \ref{rem:lin res powers}, $I^s$ has linear
resolution for all $s \geq 1$. Moreover, by Remark \ref{rem:depth
powers}, $\depth R/I^s = 0$ for all $s \geq 3$ and hence $\pdim R/I^s
= 4$ for all $s \geq 3$.  Therefore, the minimal graded free
resolution of $R/I^s$, for $s \geq 3$, is of the form 
\[
  0 \to R(-3s-3)^{\beta_4} \overset{\partial_4}{\longrightarrow}  R(-3s-2)^{\beta_3} \overset{\partial_3}{\longrightarrow}
  R(-3s-1)^{\beta_2}
  \overset{\partial_2}{\longrightarrow} R(-3s)^{\beta_1}
  \overset{\partial_1}{\longrightarrow} R \to 0.
\]
Let $g_1 = x_1x_2x_3, g_2 = x_1x_2x_4, g_3=x_1x_3x_4$ and $g_4 =
x_2x_3x_4$. The minimal generators of $I^s$ are of the form
$g_1^{\ell_1}g_2^{\ell_2}g_3^{\ell_3}g_4^{\ell_4}$ where $0 \leq
\ell_i \leq s$ for every $i$ and $\ell_1 + \ell_2 +\ell_3 + \ell_4=s$.
The number of non-negative integral solution to the 
linear equation $\ell_1 + \ell_2 +\ell_3 + \ell_4=s$ is ${s+3 \choose 3}$. Hence we have 
$\mu(J^s)= \beta_1 = {s+3 \choose 3}$. Note that
$g_1^{\ell_1}g_2^{\ell_2}g_3^{\ell_3}g_4^{\ell_4}=x_1^{s-\ell_4}x_2^{s-
\ell_3}x_3^{s- \ell_2}x_4^{s- \ell_1} $ with $0 \leq
\ell_1,\ell_2,\ell_3,\ell_4 \leq s$. Set
$f_{\ell_1,\ell_2,\ell_3,\ell_4}=x_1^{s-\ell_4}x_2^{s- \ell_3}x_3^{s-
\ell_2}x_4^{s- \ell_1} $. Let \[
  \{e_{\ell_1,\ell_2,\ell_3,\ell_4} ~ : ~ 0 \leq
  \ell_1,\ell_2,\ell_3,\ell_4 \leq s \text{ and } \ell_1 + \ell_2
+\ell_3 + \ell_4=s \}\]
denote the standard basis of $R^{s+3 \choose 3}$ and consider
the map $\partial_1 : R^{s+3 \choose 3} \to R$ defined by $\partial_1(e_{\ell_1,\ell_2,\ell_3,\ell_4}) = f_{\ell_1,\ell_2,\ell_3,\ell_4}$. Now we find the minimal generators for $\ker \partial_1$.

Let $f_{\ell_1,\ell_2,\ell_3,\ell_4}, f_{t_1,t_2,t_3,t_4}$ be any two
minimal
monomial generators of $I^s$. It is known that the kernel of $\partial_1$ is
generated by binomials of the form
$m_{\ell_1,\ell_2,\ell_3,\ell_4}f_{\ell_1,\ell_2,\ell_3,\ell_4} -
m_{t_1,t_2,t_3,t_4}f_{t_1,t_2,t_3,t_4}$, where
$m_{i,j,k,l}$'s are
monomials in $R$. Since the syzygy is generated by linear binomials, 
we need to find
conditions on $\ell_1,\ell_2,\ell_3, \ell_4$ such that
$\frac{f_{\ell_1,\ell_2,\ell_3,\ell_4}}{f_{t_1,t_2,t_3,t_4}}$ is equal
to $\frac{x_i}{x_j}$ for some $i,j$. Observe that
\[
 \frac{f_{\ell_1,\ell_2,\ell_3,\ell_4}}{f_{t_1,t_2,t_3,t_4}} = x_1^{t_4-\ell_4} x_2^{t_3-\ell_3} x_3^{t_2-\ell_2} x_4^{t_1-\ell_1}.
\]
Suppose $t_4=\ell_4 +1$, $t_{3}=\ell_3-1$ and $t_j=\ell_j$ for $j=1,2$. Then we get the linear syzygy relation
\begin{eqnarray}\label{eqn:beta_2 relations00}
 x_{2} f_{\ell_1,\ell_2,\ell_3,\ell_4} - x_1 f_{\ell_1,\ell_{2}, \ell_3-1, \ell_4 +1}=0.
\end{eqnarray}
Fixing $t_3$ and $t_4$, with $1 \leq t_3 + t_4 \leq s$, there are as
many linear syzygies are there as the number of solutions of $t_1+t_2
= s-(t_3+t_4)$. Therefore, for the pair $(t_3,t_4)$, there are ${s+1
\choose 2} + {s \choose 2} + \cdots + {2 \choose 2} = {s+2 \choose 3}$
number of solutions. Similarly for each pair $(t_4, t_2), (t_4, t_1),
(t_3, t_2), (t_3, t_1)$ and $(t_2,t_1)$, we get ${s+2 \choose 3}$
linear syzygies. 
Note that the syzygies $x_4f_{t_1+1,t_2-1,t_3,t_4} -
x_3f_{t_1,t_2,t_3,t_4}$ and $x_3f_{t_1,t_2,t_3,t_4} -
x_2f_{t_1,t_2-1,t_3+1,t_4}$ give rise to another linear syzygy
$x_4f_{t_1+1,t_2-1,t_3,t_4} -x_2f_{t_1,t_2-1,t_3+1,t_4}$. The same
linear syzygy can also be obtained from a combination of linear
syzygies that arise out of the pairs $(t_1,t_4)$
and $(t_3,t_4)$. Therefore, to get a minimal
generating set, we only need to consider linear syzygies corresponding
to the pairs $(t_1,t_2), ~(t_2,t_3)$ and $(t_3,t_4)$. For each such
pair, we have ${s+2 \choose 3}$ number of linear syzygies. Hence
$\beta_2 = 3 {s+2 \choose 3}$.

Write the basis elements of $R^{3{s+2\choose 3}}$ as 
\[
     B_2 = \left\{    \begin{array}{ll} 
	  e_{(1,\ell_1-1,\ell_2),\ell_3+1,\ell_4}\\
	  e_{(2,\ell_1-1,\ell_4),\ell_2+1,\ell_3}\\
	  e_{(3,\ell_3,\ell_4),\ell_1,\ell_2}
	  \end{array}
  : \; 1 \leq \ell_1  \leq s,\; 0 \leq \ell_2,\ell_3,\ell_4 \leq s-1
\right\} 
\]
and define
$\partial_2 : R^{3{s+1\choose 2}} \longrightarrow R^{\beta_1}$ by
\begin{eqnarray*}
\partial_2(e_{(1,\ell_1-1,\ell_2),\ell_3+1,\ell_4}) & = & x_{2}
e_{\ell_1-1,\ell_2,\ell_3+1,\ell_4} - x_1 e_{\ell_1-1,\ell_{2},
\ell_3, \ell_4 +1}; \\
\partial_2(e_{(2,\ell_1-1,\ell_4),\ell_2+1,\ell_3}) & = & x_{3}
e_{\ell_1-1,\ell_2+1,\ell_3,\ell_4} - x_2 e_{\ell_1-1,\ell_{2},
\ell_3+1, \ell_4};\\
\partial_2(e_{(3,\ell_3,\ell_4),\ell_1,\ell_2}) & = & x_{4} e_{\ell_1,\ell_2,\ell_3,\ell_4} - x_3 e_{\ell_1-1,\ell_{2}+1, \ell_3, \ell_4}.
\end{eqnarray*}
Now we decipher the Betti numbers $\beta_3$ and $\beta_4$ to complete the resolution. The Hilbert series of $R/I^s$ is 
\[
 H(R/I^s,t) = \frac{{ \left( 1- \beta_1 t^{3s} + \beta_2 t^{3s+1} - \beta_3t^{3s+2} +\beta_4t^{3s+3} \right)}}{(1-t)^4}=\frac{p(t)}{(1-t)^4}.
\]
Since $\dim R/I^s=2$, the polynomial $p(t)$ has a factor $(1-t)^2$. Note that $(1-t)^2$ is a monic polynomial of degree $2$, hence we can write $p(t)= (1-t)^2 \cdot q(t)$ where 
\begin{eqnarray}\label{eqn:q(t) polynomial}
q(t)=\beta_4 t^{3s+1}+ (2\beta_4-\beta_3)t^{3s}+a_{3s-1}t^{3s-1}+\cdots + a_1 t + 1.
\end{eqnarray}
On the other hand, we also have 
\begin{eqnarray}\label{eqn:q(t) polynomial from HS}
 \frac{p(t)}{(1-t)^{2}} & = &
 \left(\sum_{n\geq 0}(n+1)t^n\right) \left( 1- \beta_1t^{3s} +
 \beta_2t^{3s+1} - \beta_3t^{3s+2} +\beta_4t^{3s+3} \right)\\
 & =& \sum_{n=0}^{3s-1}(n+1)t^n+ (3s+1 - \beta_1 ) t^{3s}+ (3s+2
 -2\beta_1 +\beta_2)t^{3s+1} + \sum_{j\geq 3s+2}a_{j}t^{j} \nonumber
\end{eqnarray}
For the expressions in Equations $(\ref{eqn:q(t) polynomial})$ and
$(\ref{eqn:q(t) polynomial from HS})$ to be equal, their respective
coefficients should be equal. In particular, we should have that 
\[
 \beta_4= 3s+2 -2\beta_1 +\beta_2 \text{ and } 2\beta_4-\beta_3 = 3s+1 - \beta_1.
\]
On substituting $\beta_1 = { s+3 \choose 3}$ and $\beta_2 = 3 {s+2 \choose 3}$, we get 
$\beta_4= {s \choose 3}$ from the first equation and substituting the
value in the second equation, we get $\beta_3=3{s+1 \choose 3} $. 
Now we verify that $a_j=0$ for all $j \geq 3s+2$. Note that for $r
\geq 2$, the coefficient of $t^{3s+r}$ in Equation (\ref{eqn:q(t) polynomial from HS}) is
\[
 a_{3s+r}= (3s+r+1) + (r-2) \beta_4  -(r-1) \beta_3  + r \beta_2 - ( r+1) \beta_1.
\]
Since, $1 - \beta_1 + \beta_2 - \beta_3 + \beta_4 = 0$, this equation
is
reduced to $a_{3s+r}= (3s+3) - \beta_3  + 2 \beta_2 - 3 \beta_1.$
Applying the binomial identify $ {n+1 \choose r+1} = {n \choose
r+1}+{n \choose r}$ repeatedly, we get $a_{3s+r}=0$ for all $r \geq 2$. Hence 
the Hilbert series of $R/I^s$ is  
\[
 H(R/I^s,t) = \frac{1+2t+3t^2+4t^3+ \cdots +3st^{3s-1} - \left({s +3 \choose 3}-3s-1  \right)t^{3s} + {s \choose 3} t^{3s+1}}{(1-t)^2}.
\]

We now complete the description of the resolution.
Write the basis elements of 
$R^{3{s+1 \choose 3}}$ as 
\[
   B_3 = \left\{   \begin{array}{ll} 
	  E_{1,\ell_1,\ell_2,\ell_3,\ell_4};\\
	  E_{2,\ell_1,\ell_2,\ell_3,\ell_4},\\
	  E_{3,\ell_1,\ell_2,\ell_3,\ell_4},
	  \end{array}
  : \; 2 \leq \ell_1 \leq s, \; 0 \leq \ell_2, \ell_3, \ell_4 \leq s-2 \right\}.
\]
Note that $|B_3|=3{s+1 \choose 3}$. Now define the map $\partial_3 : R^{3{s+1 \choose 3}} \longrightarrow R^{3 {s+2 \choose 3}}$ by 
\begin{eqnarray*}
\partial_3(E_{1,\ell_1,\ell_2,\ell_3,\ell_4}) & = & x_{4} e_{(2,\ell_1-1,\ell_4),\ell_2+1,\ell_3 } + x_4 
e_{(1,\ell_1-1,\ell_2),\ell_3+1,\ell_4}-x_3 e_{(3,\ell_3,\ell_4),\ell_1-1,\ell_2+1} \\ & &
-x_3 e_{(2,\ell_1-2,\ell_4),\ell_2+2,\ell_3 } -x_3 e_{(1,\ell_1-2,\ell_2+1),\ell_3+1,\ell_4} +x_1 e_{(3,\ell_3,\ell_4 +1),\ell_1-1,\ell_2};\\
\partial_3(E_{2,\ell_1,\ell_2,\ell_3,\ell_4}) & = & x_{4} e_{(1,\ell_1-1,\ell_2),\ell_3+1,\ell_4} - x_3 e_{(1,\ell_1-2,\ell_2+1),\ell_3+1,\ell_4} - x_2 e_{(3,\ell_3+1,\ell_4),\ell_1-1,\ell_2} \\ & &
+x_1 e_{(3,\ell_3,\ell_4+1),\ell_1-1,\ell_2};\\
\partial_3(E_{3,\ell_1,\ell_2,\ell_3,\ell_4}) & = & x_{3} e_{(1,\ell_1-2,\ell_2+1),\ell_3+1,\ell_4} - x_2 e_{(2,\ell_1-2,\ell_4),\ell_2+1,\ell_3+1 } -x_2 e_{(1,\ell_1-2,\ell_2),\ell_3+2,\ell_4} \\ & &  + x_1 e_{(2,\ell_1-2,\ell_4+1),\ell_2+1,\ell_3 }.
\end{eqnarray*}
We now compute the kernel of $\partial_3$. Consider the set
\[
   A = \left\{   
	   H_{\ell_1,\ell_2,\ell_3,\ell_4}
  :  3 \leq \ell_1 \leq s, \; 0 \leq \ell_2, \ell_3, \ell_4 \leq s-3 \right\},
\]
where 
\begin{eqnarray*}
  H_{\ell_1,\ell_2,\ell_3,\ell_4} & = & x_4 E_{3,\ell_1,\ell_2,\ell_3,\ell_4} -x_3 
  E_{2,\ell_1-1,\ell_2+1,\ell_3,\ell_4}  - x_3 E_{3,\ell_1-1,\ell_2+1,\ell_3,\ell_4} 
	  + x_2 E_{1,\ell_1-1,\ell_2,\ell_3+1,\ell_4} \\ & & - x_1 E_{1,\ell_1-1,\ell_2,\ell_3,\ell_4+1}  + 
 x_1 E_{2,\ell_1-1,\ell_2,\ell_3,\ell_4+1}.
\end{eqnarray*}
It can be verified that $\partial_3
(H_{\ell_1,\ell_2,\ell_3,\ell_4})=0$, i.e.,  $A \subseteq \ker \partial_3$. 
Let $\ell_1^{\prime}=\ell_1-3$, then one has $ \ell_1^{\prime} +
\ell_2 +\ell_3 +\ell_4 =s-3$. The cardinality of $A$ is equal to the
total number of non-negative integral solution of this linear equation
which is ${s \choose 3}$. As in the proof of Theorem \ref{C3}, it can
be seen that 
$\ker \partial_3 = \langle A\rangle$. Write the basis elements of $R^{{s \choose 3}}$ as $B_4= \left\{ G_{\ell_1,\ell_2,\ell_3,\ell_4}  : \; 3 \leq \ell_1 \leq s, \; 0 \leq \ell_2, \ell_3, \ell_4 \leq s-3 \right\} $
and define the map $\partial_4 : R^{{s \choose 3}} \longrightarrow R^{3 {s+1 \choose 3}}$ by 
\begin{eqnarray*}
\partial_4(G_{\ell_1,\ell_2,\ell_3,\ell_4}) = &  H_{\ell_1,\ell_2,\ell_3,\ell_4}.
\end{eqnarray*}
This is an injective map and hence we get the complete resolution:
\[
   0 \to R(-3s-3)^{\beta_4} {\longrightarrow}   R(-3s-2)^{\beta_3} {\longrightarrow}
  R(-3s-1)^{\beta_2}
 {\longrightarrow} R(-3s)^{\beta_1}
  {\longrightarrow} R \to 0.
\]
\end{proof} 

Note that in the above proof, we used $s \geq 3$ only to conclude that
$\pdim(R/I^s) = 4$. By Remark \ref{rem:depth powers}, $\depth(R/I) =
2$ and hence $\pdim(R/I) = 2$. Similarly, $\depth(R/I^2) = 1$ and
hence $\pdim(R/I^2) = 3$. This forces $\partial_2$ to be injective
when $s = 1$ and $\partial_3$ to be injective when $s = 2$.
The computations of syzygies in the cases of
resolution of $R/I$ and $R/I^2$ remain the same as given in the above
proof. Therefore, we get resolutions truncated at
$R^{\beta_2}$ in the case of $R/I$ and truncated at $R^{\beta_3}$ in
the case of $R/I^2$, with the expressions for $\beta_2$ and $\beta_3$
coinciding with the ones given in the proof. Therefore, we can
conclude that in this case, $\reg(I^s) = 3s$ for all $s \geq 1$.

As an immediate consequence, we obtain an expression for the
asymptotic regularity of cover ideals of complete $4$-partite graphs.

\begin{theorem}\label{4-partite}
Let $G$ denote a complete $4$-partite graph with $V(G) = \sqcup_{i=1}^4
V_i$ and $E(G) = \{\{a,b\} ~ : ~ a \in V_i, b \in V_j, i \neq j \}$. 
Set $V_i = \{x_{i1}, \ldots, x_{im_i}\}$ for $i = 1, \ldots, 4$. Let
$J_G \subset R = K[x_{ij} : 1 \leq i \leq 4; ~1\leq j \leq m_i]$ 
denote the cover ideal of $G$. Then the minimal free resolution of $R/J_G^s$ is of the form:
\[
0 \to R^{s \choose 3} {\longrightarrow}   R^{3{s+1\choose 3}} {\longrightarrow}
  R^{3{s+2 \choose 3}}
 {\longrightarrow} R^{s+3 \choose 3}
  {\longrightarrow} R \to 0.
\]
Set $ \alpha = (s-\ell_4)m_1 +(s-\ell_3)m_2+ (s-\ell_2)m_3 +
(s-\ell_1)m_4$. Furthermore, we have 
\[
\reg(J_G^s) = \max \left\{
  \begin{array}{ll} 
	 \alpha,               & \text{ for }  0 \leq \ell_1, \ell_2, \ell_3, \ell_4 \leq s, \\
	\alpha + m_4-1,  & \text{ for }  1 \leq \ell_1 \leq s, \; 0 \leq \ell_2, \ell_3, \ell_4 \leq s-1, \\
	\alpha + 2m_4-2, & \text{ for }  2 \leq \ell_1 \leq s, \; 0 \leq \ell_2, \ell_3, \ell_4 \leq s-2, \\
	\alpha + 3m_4-3, & \text{ for }  3 \leq \ell_1 \leq s, \; 0 \leq \ell_2, \ell_3, \ell_4 \leq s-3, 
  \end{array}
\right.
\]
where $ \ell_1 + \ell_2 +\ell_3 +\ell_4 =s$.
\end{theorem}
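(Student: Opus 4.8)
The plan is to imitate, at one level of generality higher, the argument already used for the tripartite case in Theorem~\ref{3-partite}: realize $J_G$ as the cover ideal of $K_4$ under a monomial substitution, transport the explicit resolution constructed in Theorem~\ref{C4}, and then recompute the graded shifts. Setting $X_i = \prod_{j=1}^{m_i} x_{ij}$ for $1 \leq i \leq 4$, note that an independent set in a complete multipartite graph lies inside a single part, so the minimal vertex covers of $G$ are exactly the complements of the four parts. Hence
\[
J_G = (X_2X_3X_4,\; X_1X_3X_4,\; X_1X_2X_4,\; X_1X_2X_3),
\]
which is precisely the ideal $I$ of Theorem~\ref{C4} after the substitution $x_i \mapsto X_i$, and $J_G^s$ is the image of $I^s$ under this substitution. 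So the first task is to argue that the resolution of Theorem~\ref{C4} specializes to a minimal free resolution of $R/J_G^s$, and the second is pure degree bookkeeping.

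First I would substitute $x_i \mapsto X_i$ into the four differentials $\partial_1,\dots,\partial_4$ of Theorem~\ref{C4}. Their entries are the single variables $x_1,\dots,x_4$, so after substitution every entry becomes one of the monomials $X_1,\dots,X_4$, each lying in the graded maximal ideal of $R$; minimality is therefore automatic. The essential point is exactness. Because the $X_i$ are monomials in pairwise disjoint blocks of variables, the minimal generators of $J_G^s$ are in bijection with those of $I^s$, and the least common multiple of any subfamily of generators of $J_G^s$ corresponds under the substitution to the lcm of the matching generators of $I^s$. Thus the lcm-lattices of the two families of monomials are isomorphic, and the minimal free resolution of $R/J_G^s$ is obtained from that of $K[x_1,\dots,x_4]/I^s$ by relabelling multidegrees; in particular the Betti numbers are unchanged, yielding the displayed resolution with ranks $\binom{s+3}{3},\,3\binom{s+2}{3},\,3\binom{s+1}{3},\,\binom{s}{3}$. (As in the remark following Theorem~\ref{C4}, for small $s$ the binomial coefficients vanish and the resolution truncates, so no separate treatment of $s=1,2$ is needed.)

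It then remains to read off the graded shifts with the weighting $\deg X_i = m_i$. The minimal generator $f_{\ell_1,\ell_2,\ell_3,\ell_4} = X_1^{s-\ell_4}X_2^{s-\ell_3}X_3^{s-\ell_2}X_4^{s-\ell_1}$ of $J_G^s$ has degree $\alpha = (s-\ell_4)m_1+(s-\ell_3)m_2+(s-\ell_2)m_3+(s-\ell_1)m_4$, which is the first row. Tracking multidegrees through $\partial_2,\partial_3,\partial_4$ shows that each of the three types of first syzygies, each type of second syzygy, and the top syzygy all carry multidegree equal to that of the associated generator plus an extra factor of $X_4$ at each homological step; equivalently, the $k$-th free module $F_k$ is generated entirely in weighted degree $\alpha + (k-1)m_4$, where the indices range over the index set of the basis $B_k$ of Theorem~\ref{C4}, namely $k-1 \leq \ell_1 \leq s$ and $0 \leq \ell_2,\ell_3,\ell_4 \leq s-(k-1)$. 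Consequently $t_k^R(R/J_G^s) = \max\{\alpha + (k-1)m_4\}$ over that index set, so $\reg(R/J_G^s) = \max_k\{t_k^R(R/J_G^s)-k\}$, and substituting into $\reg(J_G^s) = \reg(R/J_G^s)+1$ turns the four values $\alpha + (k-1)m_4 - (k-1)$ for $k=1,2,3,4$ into the four rows of the asserted maximum.

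The only genuinely delicate step is the exactness claim of the second paragraph; the multidegree tracking and the extraction of the maximum are routine once the resolution shape is in hand. In the present framework this exactness is read off directly from Theorem~\ref{C4} as in the proof of Theorem~\ref{3-partite}, but the clean underpinning is the lcm-lattice isomorphism, which also explains why the substitution introduces no new syzygies: the disjointness of the variable blocks forbids any extra common factors among the $X_i$, so the combinatorics governing the resolution are identical to those of $I^s$.
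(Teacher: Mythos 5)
Your proposal is correct and takes essentially the same route as the paper: identify $J_G^s$ with the $s$-th power of the cover ideal of $K_4$ via $x_i \mapsto X_i$, transport the explicit resolution of Theorem \ref{C4}, and recompute the graded shifts, noting that each homological step contributes an extra $\deg X_4 = m_4$, so that row $k$ of the maximum is $\alpha + (k-1)(m_4-1)$ over the index set of the $k$-th basis. The only difference is that you justify explicitly (via the lcm-lattice isomorphism for the disjoint variable blocks, or equivalently the freeness of $R$ over $K[X_1,\ldots,X_4]$) why the substituted complex remains a minimal free resolution, a point the paper leaves implicit.
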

\begin{proof}Let $X_i = \prod_{j=1}^{m_i} 
x_{ij}$. Then $J_G = (X_1X_2X_3, X_1X_2X_4, X_1X_3X_4, X_2X_3X_4)$.
Then it
follows from Theorem \ref{C4} that the minimal free resolution of $R/J_G^s$ is of the given
form
\[
0 \to R^{s \choose 3} {\longrightarrow}   R^{3{s+1\choose 3}} {\longrightarrow}
  R^{3{s+2 \choose 3}}
 {\longrightarrow} R^{s+3 \choose 3}
  {\longrightarrow} R \to 0.
\]
To compute the regularity of $R/J_G^s$, we first need to find the degree's of the generators 
of the syzygies. 
Following the notation of Theorem \ref{C4}, we have 
\begin{eqnarray*}
 \deg e_{\ell_1,\ell_2,\ell_3,\ell_4}  & = &  \deg \left( X_1^{s-\ell_4}X_2^{s- \ell_3}X_3^{s- \ell_2}X_4^{s- \ell_1} 
 \right) = (s-\ell_4)m_1 +(s-\ell_3)m_2+ (s-\ell_2)m_3 +
 (s-\ell_1)m_4,
 \\
 \deg  e_{(1,\ell_1-1,\ell_2),\ell_3,\ell_4} & =  & \deg e_{(2,\ell_1-1,\ell_4),\ell_2+1,\ell_3}  = 
 \deg e_{(3,\ell_3,\ell_4),\ell_1,\ell_2} =  \deg
 e_{\ell_1,\ell_2,\ell_3,\ell_4} + \deg(X_4),\\
 \deg E_{1,\ell_1,\ell_2,\ell_3,\ell_4} & = & \deg
 E_{2,\ell_1,\ell_2,\ell_3,\ell_4}  = \deg
 E_{3,\ell_1,\ell_2,\ell_3,\ell_4} = \deg
 e_{\ell_1,\ell_2,\ell_3,\ell_4} + 2\deg(X_4), \\
 \deg G_{\ell_1,\ell_2,\ell_3,\ell_4} & = &  \deg e_{\ell_1,\ell_2,\ell_3,\ell_4} + 3\deg(X_4).
\end{eqnarray*}
Therefore, by setting $\alpha = \deg e_{\ell_1,
\ell_2,\ell_3,\ell_4}$, we get
\[
\reg(J_G^s) = \max \left\{
  \begin{array}{ll} 
	\alpha,     & \text{ for }  0 \leq \ell_1, \ell_2, \ell_3, \ell_4 \leq s, \\
	\alpha + \deg(X_4)-1,  & \text{ for }  1 \leq \ell_1 \leq s, \; 0 \leq \ell_2, \ell_3, \ell_4 \leq s-1, \\
	\alpha + 2\deg(X_4)-2, & \text{ for }  2 \leq \ell_1 \leq s, \; 0 \leq \ell_2, \ell_3, \ell_4 \leq s-2, \\
	\alpha + 3\deg(X_4)-3, & \text{ for }  3 \leq \ell_1 \leq s, \; 0 \leq \ell_2, \ell_3, \ell_4 \leq s-3, 
  \end{array}
\right.,
\]
where $ \ell_1 + \ell_2 +\ell_3 +\ell_4 =s$.
\end{proof}
Here also, we have obtained an expression for $\reg(J_G^s)$ not in the
form of a linear polynomial. But, as we have demonstrated in the
previous cases, this can always be derived for a given graph.
Analyzing the interplay between the cardinalities of the partitions, one
can obtain the polynomial expression. Let $m_1
= m_2 = m_3 = m_4 = m$. Then
\begin{eqnarray*}
\alpha & = & (s-\ell_4)m_1 +(s-\ell_3)m_2+ (s-\ell_2)m_3 +
(s-\ell_1)m_4 \\
& = & (4s - (\ell_1+\ell_2+\ell_3+\ell_4))m = 3ms.
\end{eqnarray*}
Therefore $\reg(J_G^s) = 3ms + (3m-3)$ for all $s \geq 3$.

\begin{corollary} Let $m,r$ be any two positive integers. Consider the
  arithmetic progression $m_1 = m$, $m_2 =m+r$, $ m_3 =m+2r$, and $m_4
  = m+3r$ in Theorem \ref{4-partite}. Then, for all $s \geq 3$, we have
\[
 \reg(J_G^s) = s (3m+6r)+3m-3.
\]
\end{corollary}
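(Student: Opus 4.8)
The plan is to substitute the arithmetic-progression cardinalities into the formula for $\reg(J_G^s)$ furnished by Theorem~\ref{4-partite} and then solve the resulting discrete maximization. First I would expand $\alpha$. Writing $m_1 = m$, $m_2 = m+r$, $m_3 = m+2r$, $m_4 = m+3r$ and using the constraint $\ell_1+\ell_2+\ell_3+\ell_4 = s$, the coefficient of $m$ collapses to $4s - (\ell_1+\ell_2+\ell_3+\ell_4) = 3s$, while the coefficient of $r$ becomes $6s - (3\ell_1 + 2\ell_2 + \ell_3)$. Hence
\[
\alpha = s(3m+6r) - r(3\ell_1 + 2\ell_2 + \ell_3).
\]
Since $r$ is a positive integer, $\alpha$ is largest precisely when the linear form $3\ell_1 + 2\ell_2 + \ell_3$ is smallest. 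As the coefficients satisfy $3 > 2 > 1 > 0$, with $\ell_4$ carrying weight $0$, the minimum in each of the four constraint regions of Theorem~\ref{4-partite} is attained by pushing as much of the total $s$ as possible onto $\ell_4$: that is, $\ell_2 = \ell_3 = 0$ and $\ell_1$ set to its smallest admissible value, with $\ell_4 = s - \ell_1$.

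Next I would evaluate the maximizing value in each of the four rows. In the row where $\ell_1$ is required to be at least $k$ (so $k = 0,1,2,3$), the optimizer is $\ell_1 = k,\ \ell_2 = \ell_3 = 0,\ \ell_4 = s-k$, giving $\alpha = s(3m+6r) - 3kr$. Adding the row's correction term $k\,m_4 - k = k(m+3r) - k$ produces the cancellation
\[
s(3m+6r) - 3kr + k(m+3r) - k = s(3m+6r) + k(m-1).
\]
Thus the four candidate maxima are exactly $s(3m+6r) + k(m-1)$ for $k = 0,1,2,3$. Here the hypothesis $s \geq 3$ enters: it guarantees that the row $k=3$ is non-empty, i.e. that $\ell_1 = 3$ with $\ell_4 = s-3 \geq 0$ is admissible.

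Finally, because $m \geq 1$ we have $m - 1 \geq 0$, so $k(m-1)$ is non-decreasing in $k$ and the maximum over $k \in \{0,1,2,3\}$ is attained at $k = 3$. This yields
\[
\reg(J_G^s) = s(3m+6r) + 3(m-1) = s(3m+6r) + 3m - 3,
\]
as claimed. The only step requiring genuine care is the first expansion: one must verify that the $r$-dependent coefficients of $\ell_1,\ell_2,\ell_3$ assemble into the single form $3\ell_1 + 2\ell_2 + \ell_3$. Once that is in hand, the optimization is a transparent greedy argument, the $r$-terms cancel row by row, and the dominance of the $k=3$ row is immediate from $m \geq 1$; so I do not expect any serious obstacle beyond bookkeeping.
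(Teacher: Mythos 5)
Your proof is correct and follows essentially the same route as the paper: substituting the progression reduces $\alpha$ to $s(3m+6r)-r(3\ell_1+2\ell_2+\ell_3)$, the same greedy assignment (minimal $\ell_1$ in each row, $\ell_2=\ell_3=0$, $\ell_4=s-\ell_1$) yields the row candidates $s(3m+6r)+k(m-1)$ for $k=0,1,2,3$, and $m\geq 1$ gives the maximum at $k=3$. Your row-by-row cancellation of the $r$-terms is just a slightly more systematic bookkeeping of the paper's identical computation.
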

\begin{proof} We have from Theorem \ref{4-partite}, $ \alpha = (s-\ell_4)m_1 +(s-\ell_3)m_2+ (s-\ell_2)m_3 + (s-\ell_1)m_4$. On substituting the values of $m_i$'s in $ \alpha$, we get 
\[
  \alpha = s(4m+6r) -m\ell_4 -(m+r)\ell_3 -(m+2r)\ell_2 - (m+3r)\ell_1
\]
By Theorem \ref{4-partite}, we have for all $s \geq 3$,
\[
\reg(J_G^s) = \max \left\{
  \begin{array}{ll} 
	\alpha,               & \text{ for }  0 \leq \ell_1, \ell_2, \ell_3, \ell_4 \leq s, \\
	\alpha + (m+3r)-1,  & \text{ for }  1 \leq \ell_1 \leq s, \; 0 \leq \ell_2, \ell_3, \ell_4 \leq s-1, \\
	\alpha + 2(m+3r)-2, & \text{ for }  2 \leq \ell_1 \leq s, \; 0 \leq \ell_2, \ell_3, \ell_4 \leq s-2, \\
	\alpha + 3(m+3r)-3, & \text{ for }  3 \leq \ell_1 \leq s, \; 0 \leq \ell_2, \ell_3, \ell_4 \leq s-3, 
  \end{array}
\right.
\]
where $ \ell_1 + \ell_2 +\ell_3 +\ell_4 =s$. To achieve the maximum value of $\alpha$, negative terms in $\alpha$ should 
be minimum. The coefficient of $\ell_1$ in negative terms in $\alpha$ is largest, so $\ell_1$ should be assigned the minimum value. After assigning the minimum value to $\ell_1$, assign the minimum value to $\ell_2$, and similarly minimum value to $\ell_3$. Then assign $ \ell_4 = s-\ell_2 -\ell_3 -\ell_4$. For instance, to get the maximum of $\alpha$ when $1 \leq \ell_1 \leq s, \; 0 \leq \ell_2, \ell_3, \ell_4 \leq s-1$, put $\ell_1=1,\ell_2=0,\ell_3=0$, and $\ell_4=s-1$. With appropriate substitution, we get for all $s \geq 3$
\[
\reg(J_G^s) = \max \left\{
  \begin{array}{ll} 
	s (3m+6r),               & \text{ for }  0 \leq \ell_1, \ell_2, \ell_3, \ell_4 \leq s, \\
	s (3m+6r)+m-1,  & \text{ for }  1 \leq \ell_1 \leq s, \; 0 \leq \ell_2, \ell_3, \ell_4 \leq s-1, \\
	s (3m+6r)+2m-2, & \text{ for }  2 \leq \ell_1 \leq s, \; 0 \leq \ell_2, \ell_3, \ell_4 \leq s-2, \\
	s (3m+6r)+3m-3, & \text{ for }  3 \leq \ell_1 \leq s, \; 0 \leq \ell_2, \ell_3, \ell_4 \leq s-3.
  \end{array}
\right.
\]
Clearly for all $m \geq 1$, and for all $s \geq 3$, we get 
\[
 \reg(J_G^s) = s (3m+6r)+3m-3.
\]
\end{proof}

\subsection{Complete $m$-partite graphs}
Let $G$ be a complete graph on $m$-vertices. Then the cover
ideal $J_G$ of $G$ is generated by $\{x_1\cdots \hat{x}_i
\cdots x_m ~ : ~ 1\leq i \leq m\}$. It follows from Remark 
\ref{rem:depth powers} that $\depth R/J_G^s = 0$ for 
all $s \geq m-1$. Moreover, by Remark \ref{rem:lin res powers}, we know 
that $R/J_G^s$ has linear resolution for all $s \geq 1$. Therefore, the 
minimal graded free resolution of $R/J_G^s$ for all $s \geq m-1$ is of the form 
\[
 0 \to R(-s(m-1)-m+1)^{\beta_m} {\longrightarrow} \to \cdots \to
  R(-s(m-1)-1)^{\beta_2}
 {\longrightarrow} R(-s(m-1))^{\beta_1}
  {\longrightarrow} R \to 0.
\]
Let $g_1,g_2,\ldots,g_m$ be the minimal generators $J_G$. Then the elements in 
$J_G^s$ consists of elements $T_{\ell_1,\ell_2,\dots,\ell_m}=g_1^{\ell_1}g_{2}^{\ell_2}\dots g_{m}^{\ell_m}$ such that 
$ \ell_1 + \ell_2 + \cdots + \ell_m = s$ and $0 \leq \ell_i \leq s$. 
Therefore the total number of elements in $J_G^s$ is same as 
the total number of non-negative integral solution to the linear equation $ \ell_1 + \ell_2 + 
\cdots + \ell_m = s$ which is ${s+m-1 \choose m-1  }$. Hence $\mu (J_G^s)= { s+m-1 \choose m-1}$. 
Therefore $\beta_1 = {s+m-1 \choose m-1}$.

Let $\{e_{\ell_1,\ell_2,\dots,\ell_m} \mid 0 \leq \ell_i
\leq s; \text{ and } \ell_1 + \ell_2 + 
\cdots + \ell_m = s\}$ denote the standard basis for
$R^{\beta_1}$. Let $\partial_1 : R^{\beta_1} \longrightarrow R$ be the
map $\partial_1(e_{\ell_1,\ell_2,\dots,\ell_m}) = T_{\ell_1,\ell_2,\dots,\ell_m}$.
As done in the proofs of Theorems \ref{C3} and \ref{C4}, we can
see that the first syzygy is given by the relations of the form 
\[
 x_i \cdot T_{\ell_1,\ell_2, \cdots, \ell_{i-1}, {\ell_{i}-1
 },{\ell_{i+1}+1},\cdots,\ell_m} -  x_{i+1}\cdot T_{\ell_1,\ell_2,
   \cdots,\ell_{i-1},\ell_{i},\ell_{i+1}, \cdots, \ell_{m}}=0
\]
for each $1 \leq i \leq m-1$. Set $\ell_i - 1 = \ell_i'$ and
$\ell_{i+1}+1 = \ell_{i+1}'$. Then it can be seen that, for each $1
\leq i \leq m-1$, there exist as many such relations as the number of
non-negative integer solutions of $\ell_1 + \cdots + \ell_i' + \cdots
\ell_m = s-1$. Therefore, the total number of such linear relations is
$(m-1){s+m-2 \choose m-1}$.
Therefore $\beta_2 = {m-1 \choose 1}{s+m-2 \choose m-1}$. 
However it is not very difficult to realize that writing down the
higher syzygy relations are quite challenging. Based on Theorems \ref{C3} and \ref{C4} and some of the
experimental results using the computational commutative algebra
package Macaulay 2 \cite{M2}, we propose the following conjecture:

\begin{conjecture} Let $R = K[x_1, x_2, \dots,x_m]$ and let $J$ be the cover ideal of the 
complete graph $K_m$. The minimal graded free resolution of $R/I^s$ for all $s \geq m-1$ is of the form 
\[
  0 \to R(-s(m-1)-m+1)^{\beta_m} {\longrightarrow} \cdots
   \longrightarrow  
  R(-s(m-1)-1)^{\beta_2}
 {\longrightarrow} R(-s(m-1))^{\beta_1}
  {\longrightarrow} R \to 0,
\]
where 
\[
 \beta_i = {m-1 \choose i-1}{ s+m-i \choose m-1}.
\]
\end{conjecture}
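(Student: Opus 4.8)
The plan is to bypass writing down the higher differentials by hand, and instead pin down the Betti numbers through the linear-quotient structure, since the \emph{shape} of the resolution is already forced once the $\beta_i$ are known. By Remark \ref{rem:lin res powers}, $J^s$ is polymatroidal, so by \cite[Proposition 5.2]{ch03} it has linear quotients with respect to the reverse lexicographic order on its minimal generators, and by Remark \ref{rem:depth powers} one has $\pdim R/J^s = m$ for all $s \geq m-1$. Writing a minimal generator as $u = x_1^{a_1}\cdots x_m^{a_m}$ with $\sum_j a_j = s(m-1)$ and $0 \leq a_j \leq s$, the first step is to compute $\operatorname{set}(u)$, the set of variables generating the colon ideal of $u$ against the revlex-larger generators. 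Using the single-variable exchange $x_j u / x_\ell$, which is again a generator and is revlex-larger than $u$ precisely when $\ell > j$, $a_\ell \geq 1$ and $a_j \leq s-1$, one obtains the clean description
\[
|\operatorname{set}(u)| = (p-1) - \#\{\, j < p : a_j = s \,\}, \qquad p := \max\{\, j : a_j > 0 \,\}.
\]

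With this in hand, the Eliahou--Kervaire formula for ideals with linear quotients gives $\beta_i(R/J^s) = \beta_{i-1}(J^s) = \sum_{u} \binom{|\operatorname{set}(u)|}{i-1}$, so the whole conjecture reduces to the single combinatorial identity $\sum_{u} \binom{|\operatorname{set}(u)|}{i-1} = \binom{m-1}{i-1}\binom{s+m-i}{m-1}$. I would prove this through its generating-function form $\sum_{u}(1+y)^{|\operatorname{set}(u)|} = \sum_{j\geq 0}\binom{m-1}{j}\binom{s+m-1-j}{m-1}y^{j}$, stratifying the generators by the value of $p$ and by which of the first $p-1$ exponents are saturated (equal to $s$). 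For fixed $p$ the contribution factors as a coefficient extraction $(1+y)^{p-1}[z^{s(m-1)}]\bigl(\phi(z)^{p-1}\psi(z)\bigr)$, where $\phi(z)=\tfrac{1-z^{s}}{1-z}+(1+y)^{-1}z^{s}$ records a free exponent (the factor $(1+y)^{-1}$ accounting for a saturated coordinate) and $\psi(z)=z\,\tfrac{1-z^{s}}{1-z}$ records the last, necessarily positive, exponent $a_p$. Summing over $p$ and simplifying via Vandermonde-type identities should return the right-hand side; the reading $\binom{s+m-i}{m-1}=\#\{\text{degree-}(s-i+1)\text{ monomials in }m\text{ variables}\}$ together with the Koszul-type factor $\binom{m-1}{i-1}$ makes the target easy to guess and then verify by induction on $m$, peeling off the variable $x_m$ according to whether $a_m=0$.

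The main obstacle is precisely this identity: the stratified sum entangles the box constraint $a_j \leq s$ with the revlex ordering, and the bookkeeping of saturated coordinates does not collapse to one Vandermonde convolution without care, so the induction on $m$ (with base cases $m \leq 4$ supplied by Theorems \ref{C3} and \ref{C4}) is the delicate part. As a cross-check I would use the Hilbert-series constraint that the numerator $P(t) = 1 + \sum_{i=1}^m (-1)^i \beta_i\, t^{s(m-1)+i-1}$ is divisible by $(1-t)^{2}$, since $\dim R/J^s = m-2$, yielding the two relations $P(1)=P'(1)=0$. For $m \leq 4$ these two relations together with the known $\beta_1$ and $\beta_2$ already determine every $\beta_i$, which is exactly how Theorem \ref{C4} proceeds; but for $m \geq 5$ there are strictly more unknowns than equations, so the Hilbert series alone cannot finish the argument, and this is the structural reason the general case is genuinely harder and must route through the linear-quotient computation above. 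Finally, the product form of $\beta_i$ suggests one could instead exhibit an explicit minimal resolution whose $i$-th free module is indexed by pairs consisting of an $(i-1)$-subset of $\{1,\dots,m-1\}$ and a degree-$(s-i+1)$ monomial; constructing such a complex and proving its exactness would also produce the differentials the conjecture implicitly asks for, but writing those maps in closed form is the genuinely hard task already flagged for $m=3,4$.
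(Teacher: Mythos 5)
This statement is not proved in the paper: it is posed as a conjecture, supported only by the cases $m=3,4$ (Theorems \ref{C3} and \ref{C4}), by the computation of $\beta_1=\binom{s+m-1}{m-1}$ and $\beta_2=(m-1)\binom{s+m-2}{m-1}$ for general $m$ immediately before the conjecture, and by Macaulay2 experiments; so there is no proof of the paper to compare yours against, and your proposal must be judged on its own. Judged so, your outline is correct and, once the combinatorial identity is supplied, it would settle the conjecture by a route strictly beyond the paper's, whose explicit-syzygy method visibly does not scale past $m=4$. The reduction is sound: $J^s$ is polymatroidal, hence has linear quotients in revlex \cite[Proposition 5.2]{ch03}; since all generators of $J^s$ have the same degree $s(m-1)$, the variable $x_j$ lies in the colon against the revlex-larger generators iff a single exchange $x_ju/x_\ell$ with $\ell>j$, $a_\ell\geq 1$, $a_j\leq s-1$ is again a generator, and your formula $|\operatorname{set}(u)|=(p-1)-\#\{j<p : a_j=s\}$ checks out (I verified it reproduces the Betti numbers of Theorems \ref{C3} and \ref{C4}). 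Two small corrections: the Betti-number formula $\beta_i(R/J^s)=\sum_u\binom{|\operatorname{set}(u)|}{i-1}$ for equigenerated ideals with linear quotients is Herzog--Takayama (mapping cones), not Eliahou--Kervaire, which concerns stable ideals; and you do not need Remark \ref{rem:depth powers} for $\pdim$, since it falls out of the formula itself.

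The step you flag as the delicate one in fact collapses, and your stratification obscures why: since $\sum_j a_j=s(m-1)$ and $a_j\leq s$, the pivot satisfies $p\geq m-1$, so every stratum with $p\leq m-2$ in your $\phi,\psi$ coefficient extraction is empty (a point your write-up misses), and $p=m-1$ occurs only for the single generator $u=(x_1\cdots x_{m-1})^s$, which has $\operatorname{set}(u)=\emptyset$. Substituting $\ell_j=s-a_j$ (so $\sum_j\ell_j=s$ and $a_j=s$ iff $\ell_j=0$) treats all generators uniformly:
\[
\sum_{u}(1+y)^{|\operatorname{set}(u)|}
=\sum_{\substack{\ell\in\mathbb{Z}_{\geq 0}^{m}\\ \ell_1+\cdots+\ell_m=s}}(1+y)^{(m-1)-\#\{j\leq m-1\,:\,\ell_j=0\}}
=\sum_{d=0}^{m-1}\binom{m-1}{d}\binom{s}{d}(1+y)^{d},
\]
where the last equality groups the $\ell$ by the set of nonzero coordinates among the first $m-1$ (for a fixed such set of size $d$ there are $\binom{s}{d}$ solutions, by stars and bars after shifting those $d$ coordinates down by one). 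Extracting the coefficient of $y^{i-1}$ and using $\binom{m-1}{d}\binom{d}{i-1}=\binom{m-1}{i-1}\binom{m-i}{d-i+1}$ together with one Vandermonde convolution gives exactly $\binom{m-1}{i-1}\binom{s+m-i}{m-1}$ --- no induction on $m$, no generating functions in $z$, no base cases needed. Residual caveats: your argument establishes precisely what the conjecture asserts (linearity, shape, and Betti numbers) but not closed-form differentials, and the identity in fact holds for all $s\geq 1$, with $\binom{s+m-i}{m-1}$ vanishing for $i>s+1$ in agreement with $\depth R/J^s=\max\{0,m-s-1\}$; for the paper's final conjecture on $\reg(J_G^s)$ one would further need the multigraded shifts, which the mapping-cone resolution carries in principle but which your Betti-number count alone does not exhibit.
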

Notice that proving the above conjecture will
give the Betti numbers of powers of cover ideals of complete 
$m$-partite graphs. We conclude our article by proposing an expression
for the regularity of powers of the cover ideals of complete
$m$-partite graphs:
\begin{conjecture} Let $G$ denote a complete $m$-partite graph with $V(G) = \sqcup_{i=1}^m
V_i$ and $E(G) = \{\{a,b\} ~ : ~ a \in V_i, b \in V_j, i \neq j \}$. 
Set $V_i = \{x_{i1}, \ldots, x_{in_i}\}$ for $i = 1, \ldots, m$. Let
$J_G \subset R = K[x_{ij} ~ : ~ 1 \leq i \leq m; ~1\leq j \leq n_i]$ denote the cover 
ideal of $G$. Let $ 0 \leq \ell_1,\ell_2,\ldots,\ell_m \leq s$ be integers such that 
$ \ell_1 + \ell_2 + \cdots +\ell_m =s$. 
Set 
\[
 \alpha = s \cdot ( \sum_{i=1}^{m}n_i) -  \sum_{i=1}^{m}n_i \ell_{m+1-i}.
\]
Then for all $s \geq m-1$, one has
\[
\reg(J_G^s) = \max \left\{
  \begin{array}{ll} 
	 \alpha,               & \text{ for }  0 \leq \ell_1, \ell_2, \cdots, \ell_m \leq s, \\
	\alpha + n_m-1,  & \text{ for }  1 \leq \ell_1 \leq s, \; 0 \leq \ell_2, \cdots, \ell_m \leq s-1, \\
	\alpha + 2(n_m-1), & \text{ for }  2 \leq \ell_1 \leq s, \; 0 \leq \ell_2,\cdots, \ell_m \leq s-2, \\
	                   & \vdots\\
	
	\alpha + (m-1)(n_m-1), & \text{ for }  m-1 \leq \ell_1 \leq s, \;
	0 \leq \ell_2, \cdots, \ell_m \leq s-(m-1).
  \end{array}
\right.
\]
\end{conjecture}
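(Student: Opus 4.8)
The plan is to reduce everything to the complete graph $K_m$ by a flat base change, exactly mirroring the passage from Theorem~\ref{C3} to Theorem~\ref{3-partite} and from Theorem~\ref{C4} to Theorem~\ref{4-partite}, and then to read the degree shifts off the resolution of $J_{K_m}^s$ predicted by the previous conjecture.

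Set $S=K[x_1,\dots,x_m]$, let $J_{K_m}\subset S$ be the cover ideal of $K_m$, and put $X_i=\prod_{j=1}^{n_i}x_{ij}\in R$. Then $J_G$ is the image of $J_{K_m}$ under $x_i\mapsto X_i$, so $J_G^s$ is the extension of $J_{K_m}^s$. Let $T=K[X_1,\dots,X_m]\subseteq R$, graded by $\deg X_i=n_i$; then $T\cong S$ as graded rings and $T\hookrightarrow R$ is a graded inclusion. The first step is the observation that $R$ is a free $T$-module: each monomial of $R$ factors uniquely as $X_1^{d_1}\cdots X_m^{d_m}$ times a monomial in which some variable of each block $\{x_{i1},\dots,x_{in_i}\}$ has exponent $0$, and these reduced monomials are a homogeneous $T$-basis of $R$. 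Hence, tensoring a minimal graded free $T$-resolution of $T/J_{K_m}^s$ with $R$ over $T$ produces, by flatness, a graded free resolution of $R/J_G^s$; its differentials have entries in $(X_1,\dots,X_m)\subseteq\mathfrak m_R$, so the resolution is minimal. Therefore the multigraded Betti numbers transport, with a free generator in multidegree $\mathbf a\in\mathbb Z_{\ge 0}^m$ now carrying the twist $\deg_R\mathbf a=\sum_k a_k n_k$. In particular $\pdim_R R/J_G^s=\pdim_S S/J_{K_m}^s=m$ for $s\ge m-1$ by Remark~\ref{rem:depth powers}, and the problem is reduced to identifying, for each $i$, the largest weighted degree $\sum_k a_k n_k$ of a minimal $i$-th syzygy of $J_{K_m}^s$.

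Next I would feed in the resolution of $J_{K_m}^s$ from the preceding conjecture. Its generators are $f_\ell=x_1^{s-\ell_m}\cdots x_m^{s-\ell_1}$ with $\ell_1+\cdots+\ell_m=s$, so $\deg_R f_\ell=\alpha$. Following the pattern proved for $K_3$ and $K_4$ in Theorems~\ref{C3} and~\ref{C4}, the higher syzygies are obtained by repeatedly multiplying by the last variable $x_m$, so every minimal generator at homological position $i+1$ has multidegree $(s-\ell_m,\dots,s-\ell_1)+i\,\mathbf e_m$ with $i\le\ell_1\le s$ and $0\le\ell_j\le s-i$ for $j\ge 2$; after base change its weighted degree is $\alpha+i\,n_m$. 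Since position $i+1$ of $R/J_G^s$ is position $i$ of $J_G^s$, the contribution to the regularity is $(\alpha+i\,n_m)-i=\alpha+i(n_m-1)$, and running $i$ from $0$ to $m-1$ and maximizing over the admissible $\ell$ reproduces exactly the $m$ rows of the asserted formula.

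The genuinely hard part is the input I have just borrowed: for general $m$ one must actually produce the minimal multigraded resolution of $J_{K_m}^s$ (the previous conjecture), or at least prove that at each homological position the maximal weighted degree is attained by a multidegree of the stated form and by none of larger weighted degree. A self-contained route that sidesteps the full resolution is to use that $J_{K_m}^s$ is polymatroidal (Remark~\ref{rem:lin res powers}) and hence has linear quotients in the reverse lexicographic order; the Herzog--Takayama iterated mapping cone (an Eliahou--Kervaire type construction) is then minimal and computes $\beta^S_{i,\mathbf a}$ through the sets $\operatorname{set}(u)$ attached to the generators, turning the regularity into the optimization $\reg(J_G^s)=\max_i\bigl(\max_u\max_{\sigma\subseteq\operatorname{set}(u),\,|\sigma|=i}[\deg_R u+\sum_{x_k\in\sigma}n_k]-i\bigr)$. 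Either way I expect the main obstacle to be purely combinatorial: controlling all the competing multidegrees at every level and checking that the asymmetric organizing shift $i\,\mathbf e_m$ really does realize the (intrinsically symmetric) maximum.
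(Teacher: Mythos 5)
The statement you are addressing is, in the paper, an open conjecture --- the authors prove it only for $m\le 4$ (Theorems \ref{C3}, \ref{3-partite}, \ref{C4}, \ref{4-partite}) --- so there is no proof in the paper to match yours against, and your proposal, by your own admission, does not close it either. What you do establish is worth recording: your flat base-change lemma (freeness of $R$ over $T=K[X_1,\dots,X_m]$ via the unique factorization of monomials, minimality preserved because the differentials have entries in $(X_1,\dots,X_m)R\subseteq\mathfrak{m}_R$) is correct, and it makes rigorous the substitution argument that the paper performs only informally in passing from Theorem \ref{C3} to Theorem \ref{3-partite} and from Theorem \ref{C4} to Theorem \ref{4-partite}. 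This correctly reduces the conjecture to a statement about the minimal \emph{multigraded} resolution of $J_{K_m}^s$ over $S=K[x_1,\dots,x_m]$, and your bookkeeping $(\alpha+i\,n_m)-i=\alpha+i(n_m-1)$ correctly converts the conjectured multidegree data into the asserted regularity formula.

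The gap is that this input is exactly what is unproven, and it is strictly stronger than the paper's first conjecture: that conjecture asserts only the total Betti numbers $\beta_i=\binom{m-1}{i-1}\binom{s+m-i}{m-1}$ of a linear resolution, whereas your argument needs to know precisely which multidegrees occur at each homological position --- two linear resolutions with the same graded Betti numbers but different multidegree distributions yield different values of $\reg(J_G^s)$ after the weighting $\deg x_k=n_k$. Your extrapolated claim that position $i+1$ carries exactly the multidegrees $(s-\ell_m,\dots,s-\ell_1)+i\,\mathbf{e}_m$ is verified in the paper only for $m=3,4$; for general $m$ nothing in your sketch rules out additional multidegrees of larger weighted degree, and the paper itself concedes that writing down the higher syzygies is ``quite challenging.'' Your fallback is the most promising ingredient: since $J_{K_m}^s$ is polymatroidal and equigenerated, it has linear quotients in revlex, the Herzog--Takayama mapping cone is then indeed minimal, and the problem becomes computing $\operatorname{set}(u)$ for each generator $u$ and solving the optimization $\max_{u,\,\sigma\subseteq\operatorname{set}(u),\,|\sigma|=i}\bigl[\deg_R u+\sum_{x_k\in\sigma}n_k\bigr]-i$; but that computation --- including the check that the intrinsically symmetric maximum is realized by the asymmetric shift $i\,\mathbf{e}_m$ --- is the entire content of the conjecture and is left undone. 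As it stands you have a correct conditional reduction plus a plausible program, not a proof.
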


\bibliographystyle{plain}  
\bibliography{refs_reg}

\end{document}